\newtheorem{theorem}{Theorem}
\theoremstyle{plain}
\newtheorem{definition}{Definition}
\newtheorem{lemma}{Lemma}
\newtheorem{proposition}{Proposition}
\newtheorem{remark}{Remark}
\numberwithin{equation}{section}
\begin{document}

\title[]{A sub-supersolution method for a class of nonlocal system involving the $p(x)-$Laplacian operator and applications}
\subjclass[2010]{Primary: 35J60; Secondary:  35Q53}
\keywords{fixed point arguments, nonlocal problems, $p(x)-$Laplacian, sub-supersolution. }

\maketitle
\vspace{-0.5cm}
\begin{center}
\author{\sc Gelson C.G. dos Santos}
\end{center}
\vspace{-0.2cm}
\begin{center}
Universidade Federal do Par\'{a}, Faculdade de Matem\'{a}tica, CEP: 66075-110, Bel\'{e}m-PA, Brazil
\end{center}
\vspace{0.1cm}
\begin{center}
\author{\sc Giovany M. Figueiredo}
\end{center}
\vspace{-0.2cm}
\begin{center}
Universidade de Bras\'{i}lia, Departamento de Matem\'{a}tica, CEP: 70910-900, Bras\'{i}lia-DF, Brazil
\end{center}
\vspace{0.1cm}
\begin{center}
	\author{\sc Leandro S. Tavares}
\end{center}
\vspace{-0.2cm}
\begin{center}
Universidade Federal do Cariri, Centro de Ci\^{e}ncias e Tecnologia, CEP:63048-080, Juazeiro do Norte-CE, Brazil
\end{center}

\begin{abstract}
	In the present paper we study the existence of solutions for a class of nonlocal system involving the $p(x)-$Laplacian operator. The approach is based on a new sub-supersolution result.
\end{abstract}

\section{Introduction}
In this work we are interested in the nonlocal system
$$
\left\{\begin{array}{rcl}\label{problema-(P)}
-\mathcal{A}(x,|v|_{L^{r_1(x)}})\Delta_{p_1(x)} u&=&f_1(x,u,v)|v|_{L^{q_1(x)}}^{\alpha_1(x)}+g_1(x,u,v)|v|_{L^{s_1(x)}}^{\gamma_1(x)} \;\;\mbox{in} \;\;\Omega,\\
-\mathcal{A}(x,|u|_{L^{r_2(x)}})\Delta_{p_2(x)} v&=&f_2(x,u,v)|u|_{L^{q_2(x)}}^{\alpha_2(x)}+g_2(x,u,v)|u|_{L^{s_2(x)}}^{\gamma_2(x)} \;\;\mbox{in} \;\;\Omega,\\
\vspace{.2cm}
 u=v=0\;\;\mbox{on}\;\;\partial\Omega,
\end{array}
\right. \eqno{(S)}
$$
where $\Omega$ is a bounded domain in $\mathbb{R}^{N} (N > 1)$ with  $C^{2}$  boundary, $|.|_{L^m(x)}$ is the norm of the space $L^{m(x)}(\Omega),$ $-\Delta_{p(x)}u:=-div(|\nabla u|^{p(x)-2}\nabla u)$ is the  $p(x)-$Laplacian operator, $r_i,p_i,q_i, s_i,\alpha_i,\gamma_i:\Omega\rightarrow[0,\infty), i=1,2$ are measurable functions and $\mathcal{A},f_1,f_2,g_1,g_2:\overline{\Omega}\times\mathbb{R}\rightarrow\mathbb{R}$ are continuous functions satisfying certain conditions.

In the last decades several works related to the  $p/p(x)-$Laplacian operator arose, see for instance \cite{acerbi-mingione,alves-moussaoui-tavares,Baraket-Bisci,cence-rep-virk,fan-zao, fan-regular, fan-zang, fan-zhang, Fan, Liu,mrr, pucci-zhang,pucci-xiang-zhang} and the references therein. Partial differential equations involving the $p(x)-$Laplacian arise, for instance, in nonlinear elasticity, fluid mechanics, non-Newtonian fluids and image processing. See for instance \cite{acerbi-mingione,bisci-radu-serva,chen-levine,radulescu-repov,ruzicka} and the references therein for more informations.

The nonlocal term $|.|_{L^{m(x)}}$ with the condition $p(x)=r(x)\equiv2$  is considered in the well known Carrier's equation
$$\rho u_{tt}-a(x,t,|u|_{L^{2}}^{2})\Delta u=0$$
that models the vibrations of a elastic string  when the variation of the tensions are not too small. See \cite{carrier} for more details. The same nonlocal term arises also in Population Dynamics, see \cite{chipot-chang1, chipot-lovat} and its references.

In the literature there are several works related to $(S)$  with $p(x) \equiv p$ ($p$ constant), see for instance, \cite{correa-cabada, chen-gao, correa-figueiredo, correa-lopes, DLX2, gel-giovany} and the references therein. For example in \cite{correa-lopes}, Corr\^{e}a  \& Lopes studied the system

$$ 
\left\{\begin{array}{rcl}
-\Delta u^{m}=a|v|_{L^{p}}^{\alpha}  & \mbox{in}  & \Omega,\\
-\Delta v^{n}=b|u|_{L^{q}}^{\beta}  & \mbox{in}  & \Omega,\\
\vspace{.2cm}
u=v=0 & \mbox{on} & \partial\Omega,
\end{array}
\right.
$$
and in \cite{chen-gao}  the authors studied a related system by using the Galerkin method.

In \cite{correa-figueiredo} the authors used a Theorem due to  Rabinowitz ( see \cite{Rabinowitz-pto-fixo} ) to study the problem
$$ 
\left\{\begin{array}{rcl}
-\Delta_{p_{1}} u=|v|_{L^{q_{1}}}^{\alpha_{1}}  & \mbox{in}  & \Omega,\\
-\Delta_{p_{2}} v=|u|_{L^{q_{2}}}^{\alpha_{2}}  & \mbox{in}  & \Omega,\\
\vspace{.2cm}
u=v=0 & \mbox{on} & \partial\Omega.
\end{array}
\right.
$$

In \cite{gel-giovany} the authors used an abstract result involving sub and supersolutions whose proof is based on the   Schaefer's Fixed Point Theorem to study a system whose general form is
$$
\left\{\begin{array}{rcl}
-\mathcal{A}(x,|v|_{L^{r_1(x)}})\Delta u&=&f_1(x,u,v)|v|_{L^{q_1(x)}}^{\alpha_1(x)}+g_1(x,u,v)|v|_{L^{s_1(x)}}^{\gamma_1(x)} \;\;\mbox{in} \;\;\Omega,\\
-\mathcal{A}(x,|u|_{L^{r_2(x)}})\Delta u&=&f_2(x,u,v)|u|_{L^{q_2(x)}}^{\alpha_2(x)}+g_2(x,u,v)|u|_{L^{s_2(x)}}^{\gamma_2(x)} \;\;\mbox{in} \;\;\Omega,\\
\vspace{.2cm}
u=v=0\;\;\mbox{on}\;\;\partial\Omega.
\end{array}
\right. 
$$
Specifically, they considered a sublinear
system, a concave-convex problem and a system of logistic equations.

In \cite{gel-gio-le} it was studied the scalar version of  $(S),$ that is, the problem
$$
\left\{\begin{array}{rcl}\label{problema-(P)}
-\mathcal{A}(x,|u|_{L^{r(x)}})\Delta_{p(x)} u&=&f(x,u)|u|_{L^{q(x)}}^{\alpha(x)}+g(x,u)|u|_{L^{s(x)}}^{\gamma(x)} \;\;\mbox{in} \;\;\Omega,\\
\vspace{.2cm}
u&=&0\;\;\mbox{on}\;\;\partial\Omega.
\end{array}
\right. \eqno{(P)}
$$
The authors obtained  an abstract result involving sub and supersolutions for $(P)$ that generalizes Theorem 1 of \cite{gel-giovany}. As an application of such result the authors generalized for the $p(x)$-Laplacian operator the three applications of  \cite[Theorem 1]{gel-giovany}.

The goal of this work is to prove  \cite[Theorem 2]{gel-giovany} for the $p(x)-$Laplacian and the three applications contained in the mentioned paper. Thus we provide a generalization of  \cite{gel-giovany} with respect to systems with variable exponents. Below we point the main differences and difficulties of this work when compared with \cite{gel-giovany}.

\begin{itemize}
	
\item[(i)] In \cite{gel-giovany} the authors used the homogeneity of $(-\Delta,H^{1}_{0}(\Omega) )$ and the eigenfunction associated to the first eigenvalue  to construct a subolution. Differently from the  $p-$Laplacian ($p(x)\equiv p$ constant)  the $p(x)-$Lapalcian is not homogeneous. Besides that, it can happen that the  first eigenvalue and the first eigenfunction of  $(-\Delta_{p(x)}, W_{0}^{1,p(x)}(\Omega))$ do not exist. Even if the first eigenvalue and the associated eigenfunction exist the  homogeneity, in general, does not allows to  use the first eigenfunction to construct a  subsolution. In order to avoid such problems we explore some arguments of \cite{gel-gio-le}.
	
\item[(ii)] We improve some arguments of \cite{gel-giovany} and we present weaker conditions on  $r_i,q_i, s_i,\alpha_i,\gamma_i,i=1,2.$
	
\item[(iii)] We generalize \cite[Theorem 2]{gel-giovany}  and as an application it is considered  some nonlocal problems that generalizes the three systems studied in \cite{gel-giovany}.
	
\item[(iv)] As in \cite[Theorem 2]{gel-giovany} and differently from several works that consider the nonlocal term $\mathcal{A}(x,|u|_{L^{r(x)}})$ satisfying   $\mathcal{A}(x,t)\geq a_{0}>0$ (where $a_0$ is a constant), the Theorem \ref{theorem-to-(S)} permit to study $(S)$ in the mentioned case and in situations where $\mathcal{A}(x,0)=0.$
	
\item[(v)]  The abstract result involving sub and super solutions is proved by using a different argument. It is used a Theorem due to Rabinowitz that can be found in \cite{Rabinowitz-pto-fixo} and some arguments of  \cite{gel-giovany} are improved.
	
\end{itemize}

In this work we will assume that $r_{i},p_{i},q_{i},s_{i},\alpha_{i},\gamma_{i}$ satisfy
\begin{itemize}
	\item[$(H)$] $p_{i}\in C^{1}(\overline{\Omega}),r_{i},q_{i},s_{i}\in L_{+}^{\infty}(\Omega),$ where
$$L_{+}^{\infty}(\Omega)=\left\{ m\in L^{\infty}(\Omega)\;\text{with}\; ess \inf m(x)\geq1\right\}$$
and $\alpha_{i},\gamma_{i}\in L^{\infty}(\Omega)$ satisfy
	$$1<p_{i}^{-}:=\inf_{\Omega} p_{i}(x)\leq p_{i}^{+}:=\sup_{\Omega} p_{i}(x)<N\;\;\text{and}\;\;\alpha_{i}(x),\gamma_{i}(x)\geq0\;\text{a.e in}\;\Omega,$$
\end{itemize}
for $i=1,2.$

In order to present our main result, we need some definitions. We say that the pair $(u_{1},u_{2})$ is a weak solution of $(S)$, if $u_{i}\in W_{0}^{1,p_{i}(x)}(\Omega)\bigcap L^{\infty}(\Omega)$ and
$$\int_{\Omega}|\nabla u_{i}|^{p_{i}(x)-2}\nabla u_{i}\nabla\varphi=\int_{\Omega}\left(\frac{f_{i}(x,u_{1},u_{2})|u_{j}|_{L^{q_{i}(x)}}^{\alpha_{i}(x)}}{\mathcal{A}(x,|u_{j}|_{L^{r_{i}(x)}})}+\frac{g_{i}(x,u_{1},u_{2})|u_{j}|_{L^{s_{i}(x)}}^{\gamma_{i}(x)}}{\mathcal{A}(x,|u_{j}|_{L^{r_{i}(x)}})} \right)\varphi,$$
for all $\varphi\in W_{0}^{1,p_{i}(x)}(\Omega)$ with $i,j=1,2$ and $i\neq j.$

Given $u,v\in \mathcal{S}(\Omega)$ we write  $u\leq v$ if $u(x)\leq v(x)$ a.e in  $\Omega.$ If $u\leq v$ we define $[u,v]:=\bigl\{w\in \mathcal{S}(\Omega): u(x)\leq w(x)\leq v(x)\;\text{a.e in}\;\Omega\bigl\}.$

In order to simplify the next definition we will use the notation below
$$\widetilde{f}_{1}(x,t,s)=f_{1}(x,t,s),\;\widetilde{g}_{1}(x,t,s)=g_{1}(x,t,s),\;\widetilde{f}_{2}(x,t,s)=f_{2}(x,s,t)$$
and $\widetilde{g}_{2}(x,t,s)=g_{2}(x,s,t).$

We say that the pairs  $(\underline{u}_{i},\overline{u}_{i}), i=1,2$ are  a sub-supersolution for  $(S)$ if  $\underline{u}_{i}\in W_{0}^{1,p_{i}(x)}(\Omega)\cap L^{\infty}(\Omega),$ $\overline{u}_{i}\in W^{1,p_{i}(x)}(\Omega)\cap L^{\infty}(\Omega)$  with $\underline{u}_{i}\leq\overline{u}_{i},$ $\underline{u}_{i}=0\leq\overline{u}_{i}\;\text{on}\;\partial\Omega$ and for all $\varphi\in W_{0}^{1,p_{i}(x)}(\Omega)$ with $\varphi\geq0$ the following inequalities hold
\begin{equation}\label{eq2.1}
\left\{\begin{array}{rcl}
\displaystyle\int_{\Omega}|\nabla \underline{u}_{i}|^{p_{i}(x)-2}\nabla\underline{u}_{i}\nabla\varphi\leq \displaystyle\int_{\Omega}\left(\dfrac{\widetilde{f}_{i}(x,\underline{u}_{i},w)|\underline{u}_{j}|_{L^{q_{i}(x)}}^{\alpha_{i}(x)}}{\mathcal{A}(x,|w|_{L^{r_{i}(x)}})}+\dfrac{\widetilde{g}_{i}(x,\underline{u}_{i},w)|\underline{u}_{j}|_{L^{s_{i}(x)}}^{\gamma_{i}(x)}}{\mathcal{A}(x,|w|_{L^{r_{i}(x)}})}\right)\varphi,\\
\\
\displaystyle\int_{\Omega}|\nabla \overline{u}_{i}|^{p_{i}(x)-2}\nabla\overline{u}_{i}\nabla\varphi\geq \displaystyle\int_{\Omega}\left(\dfrac{\widetilde{f}_{i}(x,\overline{u}_{i},w)|\overline{u}_{j}|_{L^{q_{i}(x)}}^{\alpha_{i}(x)}}{\mathcal{A}(x,|w|_{L^{r_{i}(x)}})}+\dfrac{\widetilde{g}_{i}(x,\overline{u}_{i},w)|\overline{u}_{j}|_{L^{s_{i}(x)}}^{\gamma_{i}(x)}}{\mathcal{A}(x,|w|_{L^{r_{i}(x)}})}\right)\varphi,
\end{array}
\right.
\end{equation}
for all $w\in[\underline{u}_{j},\overline{u}_{j}]$ where  $i,j=1,2$ with $i\neq j.$

Our main result is described below.
\begin{theorem}\label{theorem-to-(S)}
	Suppose that for $i=1,2,$ $r_{i},p_{i},q_{i},s_{i},\alpha_{i}$ and $\gamma_{i}$ satisfy $(H),$ $(\underline{u}_{i},\overline{u}_{i})$ is a sub-supersolution for $(S)$ with $\underline{u}_{i}>0\;\mbox{a.e in}\;\Omega,$ $f_{i}(x,t,s), g_{i}(x,t,s)\geq0$ in $\overline{\Omega}\times[0,|\overline{u}_{1}|_{L^{\infty}}]\times[0,|\overline{u}_{2}|_{L^{\infty}}]$ and $\mathcal{A}:\overline{\Omega}\times(0,\infty)\rightarrow\mathbb{R}$ is a continuous function with $\mathcal{A}(x,t)>0\;\mbox{in}\;\overline{\Omega}\times\big[\underline{\sigma},\overline{\sigma}\big],$
	where $\underline{\sigma}:=\min\big\{|\underline{w}|_{L^{r_{i}(x)}},i=1,2\big\},$ $\overline{\sigma}:=\max\big\{|\overline{w}|_{L^{r_{i}(x)}},i=1,2\big\},$ $\underline{w}:=\min\{\underline{u}_{i},i=1,2\}$ and $\overline{w}:=\max\{\overline{u}_{i},i=1,2\}.$
	Then $(S)$ has a weak positive solution $(u_{1},u_{2})$ with $u_{i}\in[\underline{u}_{i},\overline{u}_{i}],i=1,2.$
\end{theorem}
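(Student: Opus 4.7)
The plan is to reduce the nonlocal coupled system $(S)$ to a family of decoupled $p(x)$-Laplacian equations with uniformly bounded right-hand sides, build a compact operator whose fixed points give weak solutions of $(S)$ lying in the order interval $[\underline{u}_1,\overline{u}_1]\times[\underline{u}_2,\overline{u}_2]$, and apply a Rabinowitz-type fixed point theorem. The sub-supersolution inequalities in (\ref{eq2.1}) are used only at the final comparison step, not to force invariance of the order interval under the operator.

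For $(w_1,w_2)\in C(\overline{\Omega})^2$ I would define the projection $\pi_i(w_i)(x):=\max\{\underline{u}_i(x),\min\{w_i(x),\overline{u}_i(x)\}\}$ and, for $i,j=1,2$ with $i\neq j$, consider the decoupled auxiliary problem
\begin{equation*}
-\Delta_{p_i(x)}u_i=\frac{\widetilde{f}_i(x,\pi_i(w_i),\pi_j(w_j))\,|\pi_j(w_j)|_{L^{q_i(x)}}^{\alpha_i(x)}+\widetilde{g}_i(x,\pi_i(w_i),\pi_j(w_j))\,|\pi_j(w_j)|_{L^{s_i(x)}}^{\gamma_i(x)}}{\mathcal{A}(x,|\pi_j(w_j)|_{L^{r_i(x)}})}
\end{equation*}
with $u_i=0$ on $\partial\Omega$. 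Since $\pi_j(w_j)\in[\underline{u}_j,\overline{u}_j]$, its $L^{r_i(x)}$-norm lies in $[\underline{\sigma},\overline{\sigma}]$, where $\mathcal{A}$ is continuous and strictly positive, so the denominator is uniformly bounded below by a positive constant; the numerator is in $L^\infty(\Omega)$ because $f_i,g_i$ are continuous on the compact set $\overline{\Omega}\times[0,|\overline{u}_1|_{L^\infty}]\times[0,|\overline{u}_2|_{L^\infty}]$. The standard existence, uniqueness and $L^\infty$ theory for the $p(x)$-Laplacian with bounded data yields a unique weak solution $u_i\in W_0^{1,p_i(x)}(\Omega)\cap L^\infty(\Omega)$, and $C^{1,\alpha}$-regularity (e.g.\ \cite{fan-regular}) gives a bound on $\|u_i\|_{C^{1,\alpha}(\overline{\Omega})}$ that is uniform in $(w_1,w_2)$. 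Setting $T(w_1,w_2):=(u_1,u_2)$ produces a completely continuous operator $T\colon C(\overline{\Omega})^2\to C(\overline{\Omega})^2$ whose range is contained in a fixed ball, and the Rabinowitz fixed point theorem (see \cite{Rabinowitz-pto-fixo}) delivers a fixed point $(u_1,u_2)=T(u_1,u_2)$.

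The main obstacle is to show $\underline{u}_i\le u_i\le\overline{u}_i$; once this is established, the projections trivialize and $(u_1,u_2)$ solves $(S)$. To obtain the upper bound, I would test the supersolution inequality for $\overline{u}_i$ with the admissible choice $w=\pi_j(u_j)\in[\underline{u}_j,\overline{u}_j]$, and the equation for $u_i$, with $\varphi=(u_i-\overline{u}_i)^+\in W_0^{1,p_i(x)}(\Omega)$ (which is admissible because $\overline{u}_i\ge 0$ on $\partial\Omega$). On the set $\{u_i>\overline{u}_i\}$ one has $\pi_i(u_i)=\overline{u}_i$, so the $\widetilde{f}_i,\widetilde{g}_i$ factors in the two right-hand sides coincide; since $f_i,g_i\ge 0$ on the order interval, $0\le\pi_j(u_j)\le\overline{u}_j$ pointwise, and $\alpha_i,\gamma_i\ge 0$, the inequalities $|\pi_j(u_j)|_{L^{q_i(x)}}^{\alpha_i(x)}\le|\overline{u}_j|_{L^{q_i(x)}}^{\alpha_i(x)}$ and $|\pi_j(u_j)|_{L^{s_i(x)}}^{\gamma_i(x)}\le|\overline{u}_j|_{L^{s_i(x)}}^{\gamma_i(x)}$ hold, while the denominators agree by the choice $w=\pi_j(u_j)$. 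Hence the right-hand side of the $u_i$-equation is pointwise not larger than that of the supersolution inequality on $\{u_i>\overline{u}_i\}$, and the classical strict monotonicity of $-\Delta_{p_i(x)}$ forces $(u_i-\overline{u}_i)^+\equiv 0$. The lower bound $u_i\ge\underline{u}_i$ follows symmetrically with $\varphi=(\underline{u}_i-u_i)^+$, using the subsolution inequality with $w=\pi_j(u_j)$ and the reversed monotonicity $|\pi_j(u_j)|_{L^{m(x)}}^{\rho(x)}\ge|\underline{u}_j|_{L^{m(x)}}^{\rho(x)}$. This yields the desired weak positive solution.
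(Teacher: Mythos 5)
Your proposal follows essentially the same route as the paper: truncate onto the order interval, solve the decoupled $p_i(x)$-Laplacian problems with uniformly bounded right-hand sides to obtain a compact solution operator, extract a fixed point, and only then use the sub-supersolution inequalities \eqref{eq2.1} with $w=\pi_j(u_j)$ and the test functions $(\underline{u}_i-u_i)^{+}$, $(u_i-\overline{u}_i)^{+}$ together with the monotonicity of the nonlocal norms to force $u_i\in[\underline{u}_i,\overline{u}_i]$. The only differences are minor: the paper works in $L^{p_1(x)}(\Omega)\times L^{p_2(x)}(\Omega)$ with energy estimates and runs Rabinowitz's theorem through an explicit homotopy parameter $\lambda$ (showing the unbounded continuum must reach $\lambda=1$ because solutions are norm-bounded for bounded $\lambda$), whereas you work in $C(\overline{\Omega})^2$ via $C^{1,\alpha}$ regularity and, since you already observe that the range of $T$ lies in a fixed ball, Schauder's theorem would deliver the fixed point directly in place of the continuum argument you only sketch.
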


\section{Preliminaries: The spaces $L^{p(x)}(\Omega),$ $W^{1,p(x)}(\Omega)$ and $W_{0}^{1,p(x)}(\Omega)$}

In this section, we  point some facts regarding to the spaces $L^{p(x)}(\Omega),$  $W^{1,p(x)}(\Omega)$ and $W_{0}^{1,p(x)}(\Omega)$ that will be often used in this work.  For more information see Fan-Zhang \cite{fan-zhang} and the references therein.

Let $\Omega\subset I\!\!R^{N} ( N \geq 1)$ be a bounded domain. Given $p\in L_{+}^{\infty}(\Omega),$ we define the generalized  Lebesgue space
$$
L^{p(x)}(\Omega )=\left\{u\in\mathcal{S}(\Omega): \int_{\Omega}|u(x)|^{p(x)}dx<\infty \right\},
$$
where $\mathcal{S}(\Omega):=\biggl\{u:\Omega\rightarrow\mathbb{R}: u\; \text{is measurable}\biggl\}$.\\

We define in $L^{p(x)}(\Omega)$ the norm

$$
|u|_{p(x)}:=\inf \left\{\lambda>0;\; \int_{\Omega}\left|\frac{u(x)}{\lambda}\right|^{p(x)}dx\leq 1\right\}.
$$
The space $(L^{p(x)}(\Omega), |.|_{L^{p(x)}})$ is a Banach space.

Given $m\in L^{\infty}(\Omega),$ we define
$$
m^{+}:=ess \sup_{\Omega}m(x)\;\;\text{e}\;\;m^{-}:=ess \inf_{\Omega}m(x).
$$

\begin{proposition}\label{norm-property} Define the quantity $\rho(u): =\int_{\Omega}|u|^{p(x)}dx.$ For all $u, u_{n}\in L^{p(x)}(\Omega), n \in \mathbb{N},$  the following assertions hold
	
	\noindent {\bf (i)} Let $u\neq 0$ in $L^{p(x)}(\Omega)$, then $|u|_{L^{p(x)}}=\lambda\Leftrightarrow \rho(\frac{u}{\lambda})=1.$
	
	\noindent {\bf (ii)} If $|u|_{L^{p(x)}}< 1 \;(= 1;\;> 1),$ then $\rho(u) < 1\; (= 1;\;> 1).$
	
	\noindent {\bf (iii)} If $|u|_{L^{p(x)}}> 1,$ then $|u|_{L^{p(x)}}^{p^{-}}\leq\rho(u)\leq|u|_{L^{p(x)}}^{p^{+}}.$
	
	\noindent {\bf (iv)} If $|u|_{L^{p(x)}}< 1,$ then $|u|_{L^{p(x)}}^{p^{+}}\leq\rho(u)\leq|u|_{L^{p(x)}}^{p^{-}}.$
	
	\noindent {\bf (v)} $|u_{n}|_{L^{p(x)}}\rightarrow0\Leftrightarrow\rho(u_{n})\rightarrow0,$  and $\;|u_{n}|_{L^{p(x)}}\rightarrow\infty\Leftrightarrow\rho(u_{n})\rightarrow\infty.$
\end{proposition}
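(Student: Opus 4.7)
The plan is to prove assertion (i) first, since it characterizes the Luxemburg norm as the unique value where the modular of the normalized function equals $1$; assertions (ii)--(iv) then follow from (i) by exploiting the pointwise scaling identity $|u/\lambda|^{p(x)} = \lambda^{-p(x)}|u|^{p(x)}$ together with the elementary bounds $\lambda^{p^-}\le \lambda^{p(x)}\le \lambda^{p^+}$ when $\lambda\ge 1$ (with reversed inequalities for $0<\lambda\le 1$); finally, (v) drops out as a direct corollary of (iii) and (iv).

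For (i), fix $u\in L^{p(x)}(\Omega)\setminus\{0\}$ and consider $F\colon (0,\infty)\to[0,\infty)$ defined by
$$F(\lambda) := \rho\!\left(\tfrac{u}{\lambda}\right) = \int_\Omega \frac{|u(x)|^{p(x)}}{\lambda^{p(x)}}\,dx.$$
I would show that $F$ is continuous, strictly decreasing, and satisfies $F(\lambda)\to +\infty$ as $\lambda\to 0^+$ and $F(\lambda)\to 0$ as $\lambda\to\infty$. Continuity on any compact subinterval of $(0,\infty)$ is handled by dominated convergence; strict monotonicity uses that $u\neq 0$ on a set of positive measure; the divergence at $0^+$ uses monotone convergence applied on $\{x\in\Omega : u(x)\neq 0\}$; the decay at $\infty$ uses dominated convergence again. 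By the intermediate value theorem there is a unique $\lambda_0>0$ with $F(\lambda_0)=1$, and monotonicity identifies $\{\lambda>0 : F(\lambda)\le 1\}$ with $[\lambda_0,\infty)$. Hence $\lambda_0 = \inf\{\lambda>0 : F(\lambda)\le 1\} = |u|_{L^{p(x)}}$, which gives the equivalence $|u|_{L^{p(x)}}=\lambda \Leftrightarrow \rho(u/\lambda)=1$.

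For (ii), set $\lambda = |u|_{L^{p(x)}}$, so $\rho(u/\lambda)=1$ by (i). Since $|u/\lambda|^{p(x)}=\lambda^{-p(x)}|u|^{p(x)}$, comparing $\lambda^{-p(x)}$ with $1$ pointwise in each of the cases $\lambda<1$, $\lambda=1$, $\lambda>1$ yields the corresponding conclusion for $\rho(u)$. For (iii) and (iv), write
$$\rho(u)=\int_\Omega \lambda^{p(x)}\cdot\lambda^{-p(x)}|u|^{p(x)}\,dx=\int_\Omega \lambda^{p(x)}\left|\tfrac{u}{\lambda}\right|^{p(x)}dx$$
with $\lambda=|u|_{L^{p(x)}}$; bounding $\lambda^{p(x)}$ between $\lambda^{p^-}$ and $\lambda^{p^+}$ in the order dictated by whether $\lambda>1$ or $\lambda<1$, and using $\rho(u/\lambda)=1$, produces both two-sided estimates. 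For (v), if $|u_n|_{L^{p(x)}}\to 0$ then eventually $|u_n|_{L^{p(x)}}<1$, and (iv) gives $\rho(u_n)\le |u_n|_{L^{p(x)}}^{p^-}\to 0$; conversely, if $|u_n|_{L^{p(x)}}\not\to 0$, extracting a subsequence with $|u_{n_k}|_{L^{p(x)}}\ge\delta>0$ and splitting into the subcases $|u_{n_k}|_{L^{p(x)}}\ge 1$ and $|u_{n_k}|_{L^{p(x)}}<1$ forces $\rho(u_{n_k})\ge \min(\delta^{p^+},1)>0$ via (iii) and (iv), contradicting $\rho(u_n)\to 0$. The statement for limits to $\infty$ is proved by the same dichotomy with inequalities reversed.

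The main obstacle is the careful handling of (i): establishing that $F$ is strictly decreasing (not merely non-increasing) and blows up as $\lambda\to 0^+$ requires exploiting that $u$ is nontrivial on a set of positive measure, so one must combine dominated convergence on compact subintervals with monotone convergence near the origin, and use the strict positivity of $\lambda^{-p(x)}$ on $\{u\neq 0\}$. Once (i) is secured, the remaining assertions reduce mechanically to the scaling identity for $|u/\lambda|^{p(x)}$ and the elementary monotonicity of $\lambda\mapsto\lambda^{p(x)}$ on either side of $\lambda=1$.
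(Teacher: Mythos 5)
Your proof is correct. Note that the paper itself gives no proof of this proposition: it is stated as a standard preliminary on the modular $\rho$ and the Luxemburg norm, with the reader referred to Fan--Zhang and the references therein, and your argument (characterizing $|u|_{L^{p(x)}}$ via the strictly decreasing function $\lambda\mapsto\rho(u/\lambda)$, then deducing (ii)--(v) from the pointwise bounds $\lambda^{p^{-}}\le\lambda^{p(x)}\le\lambda^{p^{+}}$ for $\lambda\ge 1$ and their reversal for $\lambda\le 1$) is precisely the standard one in those sources; the finiteness of $F(\lambda)=\rho(u/\lambda)$ for every $\lambda>0$, which your intermediate-value argument tacitly uses, is guaranteed here by $p^{+}<\infty$. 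The only cosmetic gap is that in (ii) and (iv) the case $u=0$ must be set aside (one cannot divide by $\lambda=0$), but it is trivial since $\rho(0)=0$.
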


\begin{theorem}
	Let $p,q\in L_{+}^{\infty}(\Omega)$. The following statements hold
	
	\noindent {\bf (i)} If $p^{-}>1$ and $\frac{1}{q(x)}+\frac{1}{p(x)}=1$ a.e in $\Omega,$ then
	$
	\left|\int_{\Omega}uvdx\right|\leq \bigl(\frac{1}{p^{-}}+\frac{1}{q^{-}}\bigl)|u|_{L^{p(x)}}|v|_{L^{q(x)}}.
	$
	
	\noindent {\bf (ii)} If $q(x)\leq p(x),\;\text{a.e in}\;\Omega$ and $|\Omega|<\infty,$ then $L^{p(x)}(\Omega)\hookrightarrow L^{q(x)}(\Omega).$
\end{theorem}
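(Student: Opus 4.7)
For part (i), the plan is to reduce to the normalized case and then apply the pointwise scalar Young inequality. After handling the trivial case when one of $u,v$ vanishes, I would set $\widetilde{u} = u/|u|_{L^{p(x)}}$ and $\widetilde{v} = v/|v|_{L^{q(x)}}$; by Proposition \ref{norm-property}(i) the modulars $\int_{\Omega}|\widetilde{u}|^{p(x)}\,dx$ and $\int_{\Omega}|\widetilde{v}|^{q(x)}\,dx$ both equal $1$. For a.e.\ $x \in \Omega$, the classical Young inequality gives $|\widetilde{u}(x)\widetilde{v}(x)| \leq \frac{|\widetilde{u}(x)|^{p(x)}}{p(x)} + \frac{|\widetilde{v}(x)|^{q(x)}}{q(x)} \leq \frac{|\widetilde{u}(x)|^{p(x)}}{p^{-}} + \frac{|\widetilde{v}(x)|^{q(x)}}{q^{-}}$. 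Integrating and applying the modular identities yields $\int_{\Omega}|\widetilde{u}\widetilde{v}|\,dx \leq \frac{1}{p^{-}} + \frac{1}{q^{-}}$; multiplying through by $|u|_{L^{p(x)}}|v|_{L^{q(x)}}$ produces the stated inequality.

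For part (ii), the plan is first to establish the set-theoretic inclusion by comparing modulars, and then the continuity of the embedding by applying part (i). Since $q(x) \leq p(x)$, the pointwise estimate $|u(x)|^{q(x)} \leq |u(x)|^{p(x)} + 1$ (which follows by splitting into $|u(x)| \leq 1$ and $|u(x)|>1$) integrates to $\int_{\Omega}|u|^{q(x)}\,dx \leq \int_{\Omega}|u|^{p(x)}\,dx + |\Omega|$, a finite quantity since $|\Omega|<\infty$, and this yields the inclusion $L^{p(x)}(\Omega) \subset L^{q(x)}(\Omega)$. For the continuity I would apply the Hölder inequality of part (i) to the product $|u|^{q(x)}\cdot 1$ with the conjugate pair $r(x) = p(x)/q(x) \geq 1$ and $r'(x) = p(x)/(p(x)-q(x))$, obtaining $\int_{\Omega}|u|^{q(x)}\,dx \leq C\,\bigl||u|^{q(x)}\bigr|_{L^{r(x)}}\,|1|_{L^{r'(x)}}$, and then use Proposition \ref{norm-property}(iii)-(iv) to translate this modular bound into a norm inequality of the form $|u|_{L^{q(x)}} \leq C'|u|_{L^{p(x)}}$ with a constant $C'$ depending only on $|\Omega|$, $p^{\pm}$, and $q^{\pm}$.

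The main obstacle will be the continuity in (ii), specifically handling the conjugate exponent $r'(x) = p(x)/(p(x)-q(x))$ on the set where $p(x) = q(x)$, where it degenerates. I would address this by partitioning $\Omega$ into $\Omega_{1} = \{p(x)>q(x)\}$ and $\Omega_{2} = \{p(x)=q(x)\}$, handling $\Omega_{2}$ directly (since there $|u|^{q(x)} = |u|^{p(x)}$ is already controlled by the $L^{p(x)}$ modular), and applying the Hölder-based argument on $\Omega_{1}$, where $r'(x)$ stays finite a.e. The remainder is careful bookkeeping of constants via the modular-to-norm conversions in Proposition \ref{norm-property}, distinguishing whether $|u|_{L^{p(x)}}$ is larger or smaller than $1$ in order to get a single embedding constant.
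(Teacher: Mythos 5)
The paper does not prove this theorem; it is stated in the preliminaries as a known fact about the spaces $L^{p(x)}(\Omega)$ with a pointer to Fan--Zhang and the references therein, so there is no in-paper argument to compare against. Judged on its own merits, your proof of (i) is correct and is the standard one: normalize, note via Proposition \ref{norm-property}(i) that the modulars of $\widetilde{u},\widetilde{v}$ equal $1$, apply the pointwise Young inequality with the exponent pair $(p(x),q(x))$, bound $1/p(x)$ and $1/q(x)$ by $1/p^{-}$ and $1/q^{-}$, and integrate. Your first step of (ii), the modular estimate $\int_{\Omega}|u|^{q(x)}\,dx\leq\int_{\Omega}|u|^{p(x)}\,dx+|\Omega|$, is also correct.

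The continuity argument in (ii), however, is both unnecessary and not salvageable as written. Unnecessary: the modular estimate you already proved, applied to the normalized function $\widetilde{u}=u/|u|_{L^{p(x)}}$ (for which $\int_{\Omega}|\widetilde{u}|^{p(x)}\,dx=1$ by Proposition \ref{norm-property}(i)), gives $\int_{\Omega}|\widetilde{u}|^{q(x)}\,dx\leq 1+|\Omega|$, and then Proposition \ref{norm-property}(iii) yields $|\widetilde{u}|_{L^{q(x)}}\leq\max\bigl\{1,(1+|\Omega|)^{1/q^{-}}\bigr\}$; homogeneity of the norm gives $|u|_{L^{q(x)}}\leq C|u|_{L^{p(x)}}$ at once, with no H\"{o}lder inequality needed. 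Not salvageable as written: your application of part (i) to the pair $r(x)=p(x)/q(x)$, $r'(x)=p(x)/(p(x)-q(x))$ requires $r^{-}>1$ and $r'\in L_{+}^{\infty}$, and your proposed partition does not secure either. Even on $\Omega_{1}=\{p>q\}$ one can have $\operatorname{ess\,inf}_{\Omega_{1}}p(x)/q(x)=1$, and $r'$ is then not essentially bounded on $\Omega_{1}$, so $r'\notin L_{+}^{\infty}(\Omega_{1})$ and the hypotheses of part (i) fail there; further partitioning into $\{p>q+\varepsilon\}$ produces constants that degenerate as $\varepsilon\to 0$. Replace the H\"{o}lder detour with the direct normalization argument above and part (ii) is complete.
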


We define the generalized Sobolev space as 
$$W^{1,p(x)}(\Omega):=\left\{u\in L^{p(x)}(\Omega):\frac{\partial u}{\partial x_{j}}\in L^{p(x)}(\Omega),j=1,...,N\right\}$$
with the norm $\|u\|_{*}=|u|_{L^{p(x)}}+\sum_{j=1}^{N}\big|\frac{\partial u}{\partial x_{j}}\big|_{L^{p(x)}}, u\in W^{1,p(x)}(\Omega).$ The space $W_{0}^{1,p(x)}(\Omega)$ is defined as the closure of  $C_{0}^{\infty}(\Omega)$ with respect to the norm $\|.\|_{*}.$

\begin{theorem} If $p^{-}>1,$ then $W^{1,p(x)}(\Omega)$ is a  Banach, separable and reflexive space.
	
\end{theorem}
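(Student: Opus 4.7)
The plan is to piggy-back on the analogous (known) properties of $L^{p(x)}(\Omega)$ via the standard isometric embedding
$$T:W^{1,p(x)}(\Omega)\longrightarrow \bigl(L^{p(x)}(\Omega)\bigr)^{N+1},\qquad T(u)=\Bigl(u,\tfrac{\partial u}{\partial x_{1}},\dots,\tfrac{\partial u}{\partial x_{N}}\Bigr),$$
where the product is equipped with the norm $\|(v_{0},\dots,v_{N})\|=|v_{0}|_{L^{p(x)}}+\sum_{j=1}^{N}|v_{j}|_{L^{p(x)}}$. By the very definition of $\|\cdot\|_{*}$, the map $T$ is a linear isometry onto its image. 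Since $L^{p(x)}(\Omega)$ is known (under $p^{-}>1$) to be a Banach, separable and reflexive space, the product $E:=(L^{p(x)}(\Omega))^{N+1}$ inherits these three properties. The whole proof reduces to showing that $T(W^{1,p(x)}(\Omega))$ is a \emph{closed} subspace of $E$.

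For completeness, I would take a Cauchy sequence $(u_{n})\subset W^{1,p(x)}(\Omega)$ and read off, from the definition of $\|\cdot\|_{*}$, that $(u_{n})$ and each $(\partial u_{n}/\partial x_{j})$ are Cauchy in $L^{p(x)}(\Omega)$. By the already known completeness of $L^{p(x)}(\Omega)$, there exist $u,v_{1},\dots,v_{N}\in L^{p(x)}(\Omega)$ such that $u_{n}\to u$ and $\partial u_{n}/\partial x_{j}\to v_{j}$ in $L^{p(x)}(\Omega)$. The key identification step is to show $v_{j}=\partial u/\partial x_{j}$ in the distributional sense: for any test function $\varphi\in C_{0}^{\infty}(\Omega)$, use the continuous embedding $L^{p(x)}(\Omega)\hookrightarrow L^{1}_{\mathrm{loc}}(\Omega)$ (a consequence of H\"older's inequality, Theorem on duality (i)) to pass to the limit in
$$\int_{\Omega}u_{n}\tfrac{\partial \varphi}{\partial x_{j}}\,dx=-\int_{\Omega}\tfrac{\partial u_{n}}{\partial x_{j}}\varphi\,dx,$$
obtaining $\int_{\Omega}u\,\partial_{j}\varphi=-\int_{\Omega}v_{j}\varphi$. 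Thus $u\in W^{1,p(x)}(\Omega)$ with $\partial u/\partial x_{j}=v_{j}$, and $u_{n}\to u$ in $W^{1,p(x)}(\Omega)$. This proves $W^{1,p(x)}(\Omega)$ is Banach and, equivalently, that $T(W^{1,p(x)}(\Omega))$ is closed in $E$.

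Once closedness of the image is secured, separability and reflexivity are immediate by abstract functional-analytic facts. A closed subspace of a separable metric space is separable (restrict a countable dense set of $E$ to a countable dense set of the image via standard approximation); transferring back through the isometry $T$ gives separability of $W^{1,p(x)}(\Omega)$. For reflexivity, a classical theorem states that a closed subspace of a reflexive Banach space is reflexive; applying this to the closed subspace $T(W^{1,p(x)}(\Omega))\subset E$ and again transporting via the isometry $T$ yields reflexivity of $W^{1,p(x)}(\Omega)$.

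The only nontrivial step in the above plan is the identification $v_{j}=\partial u/\partial x_{j}$ in the distributional sense; this is the ``main obstacle'' but is handled in one line once one has the continuous embedding $L^{p(x)}(\Omega)\hookrightarrow L^{1}_{\mathrm{loc}}(\Omega)$ guaranteed by H\"older's inequality and the boundedness of $\Omega$. All the remaining ingredients—completeness, separability and reflexivity of $L^{p(x)}(\Omega)$ under $p^{-}>1$, and the permanence of these properties under finite products and closed subspaces—are standard and quoted from Fan--Zhang \cite{fan-zhang}.
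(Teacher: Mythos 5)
Your argument is correct and is exactly the standard route (also the one in the cited reference Fan--Zhang): the paper itself states this theorem without proof, quoting it as a known preliminary fact. The isometric embedding of $W^{1,p(x)}(\Omega)$ into $(L^{p(x)}(\Omega))^{N+1}$, the closedness of the image via the distributional identification of the limit derivatives (using $L^{p(x)}(\Omega)\hookrightarrow L^{1}(\Omega)$ for bounded $\Omega$), and the permanence of completeness, separability and reflexivity under finite products and closed subspaces together give a complete proof; note only that reflexivity of $L^{p(x)}(\Omega)$ uses $p^{+}<\infty$ as well as $p^{-}>1$, which is guaranteed here since $p\in L_{+}^{\infty}(\Omega)$.
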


\begin{proposition}\label{embeddings} Let $\Omega\subset\mathbb{R}^{N}$ be a bounded domain and consider $p,q\in C(\overline{\Omega}).$ Define the function $p^{*}(x)=\frac{Np(x)}{N-p(x)}$ if $p(x)<N$ and $p^{*}(x)=\infty$ if $N\geq p(x).$ The following statements hold.
	
	\noindent {\bf (i)} (Poincar\'{e} inequality) If $p^{-}>1,$ then there is a constant $C > 0$ such that $|u|_{L^{p(x)}}\leq C|\nabla u|_{L^{p(x)}}$ for all $u\in W_{0}^{1,p(x)}(\Omega).$
\\
	
	\noindent {\bf (ii)} If $p^{-},q^{-} > 1$ and $q(x)<p^{*}(x)$ for all $x\in\overline{\Omega},$ the embedding
	$W^{1,p(x)}(\Omega) \hookrightarrow L^{q(x)}(\Omega)$ is continuous and compact.
\end{proposition}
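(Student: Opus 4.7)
The strategy is to reduce both parts to the classical constant-exponent Sobolev theory via a covering/localization argument that exploits the uniform continuity of $p,q$ on the compact set $\overline{\Omega}$. I would prove (ii) first and then derive (i) as a consequence of the compact embedding.

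\textbf{Step 1: Continuous and compact embedding (ii).} Since $q<p^{*}$ on the compactum $\overline{\Omega}$ and both are continuous, choose $\delta>0$ so that $q(x)+2\delta\leq p^{*}(x)$ for every $x\in\overline{\Omega}$. Using uniform continuity of $p$ and $q$, cover $\overline{\Omega}$ by finitely many open balls $B_{1},\dots,B_{k}$ (with, say, $C^{1}$ intersections with $\Omega$ using the boundary regularity) such that on each $B_{j}$ one has $|p(x)-p_{j}^{-}|,|q(x)-q_{j}^{+}|<\delta/4$ where $p_{j}^{-}:=\inf_{B_{j}}p$ and $q_{j}^{+}:=\sup_{B_{j}}q$; the choice of $\delta$ ensures the strict inequality $q_{j}^{+}<(p_{j}^{-})^{*}$ is preserved with a uniform gap. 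Pick a partition of unity $\{\varphi_{j}\}$ subordinate to this cover.

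For a bounded sequence $(u_{n})\subset W^{1,p(x)}(\Omega)$, Proposition \ref{norm-property} gives a uniform modular bound $\int_{\Omega}(|u_{n}|^{p(x)}+|\nabla u_{n}|^{p(x)})\leq C$, which restricted to $B_{j}$ and combined with boundedness of $\Omega$ yields a uniform bound on $\int_{B_{j}\cap\Omega}(|u_{n}|^{p_{j}^{-}}+|\nabla u_{n}|^{p_{j}^{-}})$, since $p_{j}^{-}\leq p(x)$ on $B_{j}$. Hence $(\varphi_{j}u_{n})$ is bounded in the constant-exponent space $W^{1,p_{j}^{-}}(B_{j}\cap\Omega)$; classical Rellich–Kondrachov supplies a subsequence converging in $L^{q_{j}^{+}}(B_{j}\cap\Omega)$, and the inclusion $L^{q_{j}^{+}}(B_{j}\cap\Omega)\hookrightarrow L^{q(x)}(B_{j}\cap\Omega)$ of the previous theorem upgrades this to convergence in $L^{q(x)}(B_{j}\cap\Omega)$, because $q(x)\leq q_{j}^{+}$ on $B_{j}$. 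A finite diagonal extraction across $j=1,\dots,k$, combined with the identity $\sum_{j}\varphi_{j}\equiv1$ on $\Omega$, produces a subsequence converging strongly in $L^{q(x)}(\Omega)$. Continuity of the embedding follows from the same modular comparisons performed without passing to subsequences.

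\textbf{Step 2: Poincaré inequality (i).} I argue by contradiction. If (i) failed, a normalization would produce $u_{n}\in W_{0}^{1,p(x)}(\Omega)$ with $|u_{n}|_{L^{p(x)}}=1$ and $|\nabla u_{n}|_{L^{p(x)}}\to 0$. The sequence is bounded in $W_{0}^{1,p(x)}(\Omega)$, so by reflexivity a subsequence $u_{n}\rightharpoonup u$ weakly, while $\nabla u_{n}\to 0$ strongly in $L^{p(x)}(\Omega)^{N}$, forcing $\nabla u=0$ in the distributional sense. Applying part (ii) with $q=p$, a further subsequence converges strongly to $u$ in $L^{p(x)}(\Omega)$, so $|u|_{L^{p(x)}}=1$. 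On the other hand, $u\in W_{0}^{1,p(x)}(\Omega)$ with vanishing distributional gradient must be identically zero (density of $C_{c}^{\infty}(\Omega)$ in $W_{0}^{1,p(x)}(\Omega)$ for the $\|\cdot\|_{*}$-norm, plus the fact that a function with zero gradient on the connected components of $\Omega$ is locally constant and thus zero by the vanishing trace), contradicting $|u|_{L^{p(x)}}=1$.

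\textbf{Main obstacle.} The crux is the localization in Step 1: the strict pointwise inequality $q<p^{*}$ on $\overline{\Omega}$ must be shown to transfer, via a sufficiently fine cover, to the uniform inequality $q_{j}^{+}<(p_{j}^{-})^{*}$ on each patch, while still keeping the cover finite; this is where continuity of $p,q$ on $\overline{\Omega}$ (and hence uniform continuity) is essential, and the continuity of $t\mapsto t^{*}=Nt/(N-t)$ away from $t=N$ ensures the gap survives under small perturbations. Once this is set up, the rest is a routine patch-and-diagonalize argument, and Step 2 is a standard compactness contradiction.
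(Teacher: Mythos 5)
The paper does not prove this proposition: it is quoted as a known preliminary from Fan--Zhang \cite{fan-zhang} and the references therein, so there is no in-paper argument to compare against. Your localization proof is essentially the standard route taken in that literature (freeze the exponents on small patches, apply constant-exponent Rellich--Kondrachov, patch with a partition of unity), and the overall logic is sound, including the reduction of (i) to (ii) by the usual normalization-and-compactness contradiction. Two points deserve care. First, for part (ii) as stated for $W^{1,p(x)}(\Omega)$ on a mere bounded domain, the classical compact embedding on each patch $B_{j}\cap\Omega$ requires some boundary regularity (cone or Lipschitz condition); you acknowledge this parenthetically, but it is a genuine hypothesis you are adding --- harmless here since the paper works with $C^{2}$ boundary, and unnecessary in Step 2 where you only apply (ii) to $W_{0}^{1,p(x)}(\Omega)$, for which extension by zero removes the issue. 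Second, the uniform gap $q_{j}^{+}<(p_{j}^{-})^{*}$ needs the case split you only gesture at: near points where $p(x)$ approaches $N$ the map $t\mapsto Nt/(N-t)$ is not uniformly continuous, and one must argue separately that $p^{*}$ is then so large that any bounded $q$ is dominated with room to spare. Finally, in Step 2 the conclusion that $u\in W_{0}^{1,p(x)}(\Omega)$ with $\nabla u=0$ forces $u=0$ should be routed through the constant-exponent space $W_{0}^{1,p^{-}}(\Omega)$ (e.g.\ via extension by zero), rather than through a trace operator, both to avoid regularity assumptions and to avoid any appearance of circularity with the Poincar\'{e} inequality being proved.
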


From $(i)$ of Proposition \ref{embeddings}, we have that  $\| u\|:=|\nabla u|_{L^{p(x)}}$ defines a norm in  $W_{0}^{1,p(x)}(\Omega)$ which is equivalent to the norm $\|.\|_{*}.$

\begin{definition}Consider $u,v\in W^{1,p(x)}(\Omega)$. We say that $-\Delta_{p(x)}u\leq -\Delta_{p(x)}v,$ if
$$\int_{\Omega}|\nabla u|^{p(x)-2}\nabla u \nabla\varphi\leq\int_{\Omega}|\nabla v|^{p(x)-2}\nabla v \nabla\varphi,$$	
 for all $\varphi\in W_{0}^{1,p(x)}(\Omega)$ with $\varphi\geq0.$
\end{definition}
The following result is contained in \cite[Lemma 2.2]{fan-zang} and \cite[Proposition 2.3]{Fan}.
\begin{proposition}\label{PC} Consider $u,v\in W^{1,p(x)}(\Omega).$ If $-\Delta_{p(x)}u\leq-\Delta_{p(x)}v$ and $u\leq v$ on $\partial\Omega,$ (i.e., $(u-v)^{+}\in W_{0}^{1,p(x)}(\Omega)$) then $u\leq v$ in $\Omega.$ If $u,v\in C(\overline{\Omega})$ and $S=\big\{x\in\Omega: u(x)=v(x)\big\}$ is a compact set of $\Omega,$ then $S=\emptyset.$ 
\end{proposition}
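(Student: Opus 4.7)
The plan is to treat the two assertions of Proposition \ref{PC} separately. The first (weak comparison) follows the standard template of testing with $(u-v)^{+}$ and exploiting the monotonicity of the vector field $\xi\mapsto|\xi|^{p-2}\xi$. The second (strong comparison) requires a Hopf-type boundary point lemma / strong maximum principle tailored to the $p(x)$-Laplacian, and this is the main point where the variable exponent creates difficulty.

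For the weak comparison, the hypothesis $(u-v)^{+}\in W_{0}^{1,p(x)}(\Omega)$ makes $\varphi:=(u-v)^{+}\geq 0$ an admissible test function in the defining weak inequality for $-\Delta_{p(x)}u\leq -\Delta_{p(x)}v$. Subtracting the two resulting inequalities gives
\begin{equation*}
\int_{\Omega}\bigl(|\nabla u|^{p(x)-2}\nabla u-|\nabla v|^{p(x)-2}\nabla v\bigr)\cdot\nabla(u-v)^{+}\,dx\leq 0.
\end{equation*}
Since $\nabla(u-v)^{+}=\chi_{\{u>v\}}(\nabla u-\nabla v)$ a.e., the pointwise monotonicity inequality
\begin{equation*}
\bigl(|a|^{p-2}a-|b|^{p-2}b\bigr)\cdot(a-b)\geq 0,\qquad p>1,
\end{equation*}
(strict whenever $a\neq b$) forces the integrand to vanish a.e., so $\nabla u=\nabla v$ a.e.\ on $\{u>v\}$, and therefore $\nabla(u-v)^{+}\equiv 0$ a.e.\ in $\Omega$. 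The Poincaré inequality of Proposition \ref{embeddings}(i), applied to $(u-v)^{+}\in W_{0}^{1,p(x)}(\Omega)$, then yields $(u-v)^{+}\equiv 0$, i.e., $u\leq v$ in $\Omega$.

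For the strong assertion, assume for contradiction that $S\neq\emptyset$, and set $w:=v-u\in C(\overline{\Omega})$. By the first part $w\geq 0$ in $\Omega$, and $S=\{w=0\}$ is compact in $\Omega$ by hypothesis. Since $\Omega$ is open, $S\subsetneq\Omega$ and hence $\{w>0\}=\Omega\setminus S$ is a non-empty open set. Pick a point $x_{0}\in S$ belonging to the relative boundary of $S$ inside $\Omega$ (non-empty because $S$ is closed with $\mathrm{dist}(S,\partial\Omega)>0$ while $\Omega\setminus S\neq\emptyset$) and choose a ball $B_{\rho}(x_{0})\Subset\Omega$ meeting $\{w>0\}$. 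Inside $B_{\rho}(x_{0})$ the differential inequality $-\Delta_{p(x)}u\leq -\Delta_{p(x)}v$ gives, after rewriting as a local quasilinear inequality for $w$, a setting in which the strong maximum principle for the $p(x)$-Laplacian from \cite{fan-zang,Fan} applies: it forces $w\equiv 0$ throughout $B_{\rho}(x_{0})$, contradicting $B_{\rho}(x_{0})\cap\{w>0\}\neq\emptyset$.

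The main obstacle lies entirely in the second step. The $p(x)$-Laplacian is neither homogeneous nor scale-invariant, so the classical Hopf lemma and its barrier constructions do not transfer verbatim from the constant-exponent case; one must invoke (or adapt) the variable-exponent strong maximum principle established in \cite{fan-zang,Fan}. By contrast, the first part is a clean consequence of the monotonicity of $\xi\mapsto|\xi|^{p-2}\xi$ and Poincaré's inequality and requires no further machinery.
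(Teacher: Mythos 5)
The paper itself offers no proof of Proposition \ref{PC}: it is imported verbatim from \cite[Lemma 2.2]{fan-zang} and \cite[Proposition 2.3]{Fan}, so your attempt has to be measured against the argument in those references. Your first part is correct and is the standard one: test with $(u-v)^{+}$, use the strict monotonicity of $\xi\mapsto|\xi|^{p-2}\xi$ to conclude $\nabla(u-v)^{+}=0$ a.e., and finish with the Poincar\'{e} inequality of Proposition \ref{embeddings}(i).

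The second part has a genuine gap, and it sits exactly where you locate ``the main obstacle.'' Because the operator is nonlinear, the difference $w=v-u$ of two functions satisfying $-\Delta_{p(x)}u\leq-\Delta_{p(x)}v$ does \emph{not} satisfy $-\Delta_{p(x)}w\geq 0$; the strong maximum principle of \cite{fan-zang} is a statement about a single nonnegative function obeying such a differential inequality, not about differences. ``Rewriting as a local quasilinear inequality for $w$'' means linearizing $\xi\mapsto|\xi|^{p(x)-2}\xi$ along the segment joining $\nabla u$ and $\nabla v$, and the resulting divergence-form coefficient matrix degenerates (for $p(x)>2$) or is singular (for $p(x)<2$) wherever $\nabla u=\nabla v=0$; strong \emph{comparison} (tangency) principles for the $p$-Laplacian are for precisely this reason delicate and require nondegeneracy hypotheses you do not have. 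A symptom of the problem is that your argument never really uses the compactness of $S$, whereas the intended proof uses it decisively and needs no maximum principle at all: since $S$ is compact in $\Omega$, choose an open $D$ with $S\subset D\Subset\Omega$ and $\partial D\cap S=\emptyset$; by the first part and continuity, $\delta:=\min_{\partial D}(v-u)>0$; since constants lie in the kernel of the gradient, $-\Delta_{p(x)}(u+\delta/2)=-\Delta_{p(x)}u\leq-\Delta_{p(x)}v$ in $D$ while $u+\delta/2<v$ in a neighborhood of $\partial D$, so $(u+\delta/2-v)^{+}\in W_{0}^{1,p(x)}(D)$ and the already-proved weak comparison applied in $D$ yields $u+\delta/2\leq v$ throughout $D\supset S$ --- contradicting $u=v$ on the nonempty set $S$. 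You should replace your second step by this translation argument.
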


\begin{lemma}\cite[Lemma 2.1]{Fan}\label{Fan}
	Let $\lambda>0$ be the unique solution of the problem
	\begin{equation}\label{probl-linear-lambda}
	\begin{aligned}
	\left\{\begin{array}{rcl}
	-\Delta_{p(x)}z_{\lambda} &=&\lambda\;\;\mbox{in} \;\;\Omega,\\
	\vspace{.2cm}
	u&=&0\;\;\mbox{on}\;\;\partial\Omega.
	\end{array}
	\right.
	\end{aligned}
	\end{equation}
Define $\rho_{0}=\frac{p^{-}}{2|\Omega|^{\frac{1}{N}}C_{0}}.$  If $\lambda\geq \rho_{0}$ then $|z_{\lambda}|_{L^{\infty}}\leq C^{*}\lambda^{\frac{1}{p^{-}-1}}$ and $|z_{\lambda}|_{L^{\infty}}\leq C_{*}\lambda^{\frac{1}{p^{+}-1}}$ if  $\lambda<\rho_{0}.$
Here  $C^{*}$ and $C_{*}$ are positive constants dependending only on $p^{+},p^{-},N,|\Omega|$ and $C_{0},$ where  $C_{0}$ is the best constant of the embedding  $W_{0}^{1,1}(\Omega) \hookrightarrow L^{\frac{N}{N-1}}(\Omega)$.
\end{lemma}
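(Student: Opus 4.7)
The plan is to prove the estimate by a classical Stampacchia (De Giorgi) iteration on super-level sets, adapted to the variable-exponent setting. Since the right-hand side of \eqref{probl-linear-lambda} is the constant $\lambda$, the test-function computations produce a clean modular identity, and the only delicate point is translating between $|\nabla z_\lambda|$ (which appears naturally from the $W^{1,1}$ Sobolev embedding) and $|\nabla z_\lambda|^{p(x)}$ (the natural quantity in the $p(x)$-energy).

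For each $k>0$ set $A(k):=\{x\in\Omega:z_\lambda(x)>k\}$. First I would choose $\varphi=(z_\lambda-k)^{+}\in W_0^{1,p(x)}(\Omega)$ as a test function in the weak formulation of \eqref{probl-linear-lambda}, obtaining
$$\int_{A(k)}|\nabla z_\lambda|^{p(x)}\,dx=\lambda\int_{A(k)}(z_\lambda-k)\,dx.$$
Since $p(x)\ge 1$ a.e.\ in $\Omega$, we have $(z_\lambda-k)^{+}\in W_0^{1,1}(\Omega)$, and combining the embedding $W_0^{1,1}(\Omega)\hookrightarrow L^{N/(N-1)}(\Omega)$ with best constant $C_0$ and Hölder's inequality yields
$$\int_\Omega (z_\lambda-k)^{+}\,dx\le C_0\,|A(k)|^{1/N}\int_{A(k)}|\nabla z_\lambda|\,dx.$$

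Next I would control $\int_{A(k)}|\nabla z_\lambda|\,dx$ in terms of $\int_{A(k)}|\nabla z_\lambda|^{p(x)}\,dx$. Using Hölder with exponents $p^-$ and $(p^-)'$ together with the pointwise splitting along $\{|\nabla z_\lambda|<1\}$ and $\{|\nabla z_\lambda|\ge 1\}$, one gets $|\nabla z_\lambda|^{p^-}\le 1+|\nabla z_\lambda|^{p(x)}$, hence
$$\int_{A(k)}|\nabla z_\lambda|\,dx\le|A(k)|^{1-1/p^-}\Bigl(|A(k)|+\int_{A(k)}|\nabla z_\lambda|^{p(x)}\,dx\Bigr)^{1/p^-}.$$
Substituting the energy identity and using $(h-k)|A(h)|\le\int_{A(k)}(z_\lambda-k)\,dx$ for $h>k$, this closes into an inequality involving only $|A(k)|$, $\lambda$ and $h-k$. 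The main obstacle is bookkeeping the correct power of $\lambda$: when $\lambda\ge\rho_0$ the integral of the modular dominates the unit correction term $|A(k)|$, the effective exponent is $p^-$, and one arrives at a bound of the form $|A(h)|\le K\,(h-k)^{-p^-/(p^--1)}\,\lambda^{1/(p^--1)}|A(k)|^{\theta}$ with $\theta>1$. When $\lambda<\rho_0$ the gradient is typically below $1$, so the pointwise inequality $|\nabla z_\lambda|^{p(x)}\ge|\nabla z_\lambda|^{p^+}$ on $\{|\nabla z_\lambda|\le 1\}$ is the one to be used, producing the symmetric bound with $p^+$ in place of $p^-$. The threshold $\rho_0=\frac{p^-}{2|\Omega|^{1/N}C_0}$ is exactly the value at which the unit term and the modular term in the preceding display balance.

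To conclude I would invoke the classical Stampacchia iteration lemma: if a non-negative non-increasing $\phi$ satisfies $\phi(h)\le M(h-k)^{-\alpha}\phi(k)^{\theta}$ with $\theta>1$ for all $h>k\ge 0$, then $\phi(d)=0$ with $d^{\alpha}=M\,\phi(0)^{\theta-1}2^{\alpha\theta/(\theta-1)}$. Applied with $\phi(k)=|A(k)|$ (so $\phi(0)\le|\Omega|$) in each regime, this gives $|z_\lambda|_{L^\infty}\le d$; tracking the constants, which depend only on $p^+,p^-,N,|\Omega|$ and $C_0$, produces $C^{*}\lambda^{1/(p^--1)}$ when $\lambda\ge\rho_0$ and $C_{*}\lambda^{1/(p^+-1)}$ when $\lambda<\rho_0$, exactly as stated.
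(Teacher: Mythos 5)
First, a point of comparison: the paper does not prove this lemma at all --- it is imported verbatim from \cite[Lemma 2.1]{Fan} --- so there is no in-paper argument to measure you against. Your plan (test with $(z_\lambda-k)^{+}$, pass through $W_0^{1,1}\hookrightarrow L^{N/(N-1)}$, convert $\int|\nabla z_\lambda|$ into the modular $\int|\nabla z_\lambda|^{p(x)}$, and run a Stampacchia iteration on $|A(k)|$) is exactly the right strategy and is essentially the original De Giorgi--Stampacchia argument behind Fan's lemma. The opening identities $\int_{A(k)}|\nabla z_\lambda|^{p(x)}=\lambda\int_{A(k)}(z_\lambda-k)$ and $\int_\Omega(z_\lambda-k)^{+}\le C_0|A(k)|^{1/N}\int_{A(k)}|\nabla z_\lambda|$ are correct.

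The step that does not go through as written is the dichotomy. You justify the two regimes by asserting that for $\lambda\ge\rho_0$ ``the integral of the modular dominates the unit correction term $|A(k)|$'' and that for $\lambda<\rho_0$ ``the gradient is typically below $1$.'' Neither claim is available uniformly in $k$: as $k\uparrow\sup z_\lambda$ both $|A(k)|$ and $\int_{A(k)}|\nabla z_\lambda|^{p(x)}$ tend to zero with no a priori control of their ratio, and for small $\lambda$ you have no pointwise gradient bound, only the modular identity; so the recursion you would feed into Stampacchia's lemma is not actually established in either regime. The standard repair is to split $A(k)$ into $\{|\nabla z_\lambda|\le1\}$ and $\{|\nabla z_\lambda|>1\}$, apply H\"older with exponent $p^{+}$ on the first piece and $p^{-}$ on the second (using $|\nabla z_\lambda|^{p^{+}}\le|\nabla z_\lambda|^{p(x)}$ there, resp.\ $|\nabla z_\lambda|^{p^{-}}\le|\nabla z_\lambda|^{p(x)}$), keep \emph{both} resulting terms, and absorb via Young's inequality to reach
$$\int_{A(k)}(z_\lambda-k)\le c\left(\lambda^{\frac{1}{p^{+}-1}}|A(k)|^{\theta_{+}}+\lambda^{\frac{1}{p^{-}-1}}|A(k)|^{\theta_{-}}\right),\qquad \theta_{\pm}=1+\tfrac{p^{\pm}}{N(p^{\pm}-1)}>1.$$
Combined with $(h-k)|A(h)|\le\int_{A(k)}(z_\lambda-k)$ this closes the iteration, and the two stated regimes then fall out at the very end: for $\lambda\ge\rho_0$ one has $\lambda^{1/(p^{+}-1)}\le C(\rho_0)\lambda^{1/(p^{-}-1)}$, and for $\lambda<\rho_0$ the reverse absorption holds, with constants depending only on $p^{\pm},N,|\Omega|,C_0$. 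A minor bookkeeping slip: the exponent of $(h-k)$ in your recursive inequality should be $-1$ (it comes from $(h-k)|A(h)|\le\int_{A(k)}(z_\lambda-k)$), not $-p^{-}/(p^{-}-1)$; this is harmless for Stampacchia's lemma but changes the tracked constants.
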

Regarding the function  $z_{\lambda}$ of the previous result, it follows from  \cite[Theorem 1.2]{fan-regular} and  \cite[Theorem 1]{fan-zang} that   $z_{\lambda} \in C^{1}(\overline{\Omega}) $ with $z_\lambda >0$ in $\Omega.$

The proof of Theorem \ref{theorem-to-(S)} is mainly based on the following result due to Rabinowitz:

\begin{theorem}\cite{Rabinowitz-pto-fixo}\label{Rabinowitz} Let $E$ be a Banach space and  $\Phi:\mathbb{R}^{+}\times E\rightarrow E$ a compact map such that $\Phi (0,u) = 0$ for all $u \in E.$ Then the equation
	$$u=\Phi(\lambda,u)$$
possesses an unbounded continuum  $\mathcal{C}\subset\mathbb{R}^{+}\times E$ of solutions with $(0,0)\in \mathcal{C}.$
\end{theorem}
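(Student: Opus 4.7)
The plan is to establish the result via Leray-Schauder degree theory combined with a topological separation argument of Whyburn type. Write $\Sigma := \{(\lambda, u) \in \mathbb{R}^{+} \times E : u = \Phi(\lambda, u)\}$ for the solution set; by continuity of $\Phi$ it is closed, and by hypothesis $(0, 0) \in \Sigma$. Let $\mathcal{C}$ denote the connected component of $\Sigma$ through $(0, 0)$, and assume for contradiction that $\mathcal{C}$ is bounded in $\mathbb{R}^{+} \times E$.

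First I would upgrade boundedness to compactness using the compactness of $\Phi$. Pick $R$ with $\mathcal{C} \subset B_R$ and set $K := \Sigma \cap \overline{B_{R+1}}$. Then $K$ is compact: any sequence in $K$ is bounded, and from $u_n = \Phi(\lambda_n, u_n) \in \Phi(\overline{B_{R+1}})$ one extracts a convergent subsequence. Enlarging $R$ if needed, I may ensure $\mathcal{C} \cap \partial B_{R+1} = \emptyset$, so that $\mathcal{C}$ remains a connected component of the compact metric space $K$.

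The crucial topological step, which I expect to be the main obstacle, is to produce a bounded open set $\mathcal{O} \subset \mathbb{R}^{+} \times E$ with $\mathcal{C} \subset \mathcal{O} \subset B_{R+1}$ and $\partial \mathcal{O} \cap \Sigma = \emptyset$. For this I would invoke Whyburn's lemma on $K$ with the disjoint closed subsets $A := \mathcal{C}$ and $B := K \cap \partial B_{R+1}$: maximality of $\mathcal{C}$ as a component rules out any connected subset of $K$ meeting both $A$ and $B$ (the union of such a subset with $\mathcal{C}$ would be connected and strictly larger), so Whyburn yields a partition $K = K_1 \sqcup K_2$ into disjoint compact sets with $\mathcal{C} \subset K_1$ and $B \subset K_2$. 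Since $K_1$ is compact and at positive distance from both $K_2$ and $\partial B_{R+1}$, thickening $K_1$ by a sufficiently small $\delta$-neighborhood inside $B_{R+1}$ delivers the desired bounded open set $\mathcal{O}$ whose boundary misses $K$, and therefore misses all of $\Sigma$.

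With $\mathcal{O}$ in hand I define $\mathcal{O}_\lambda := \{u \in E : (\lambda, u) \in \mathcal{O}\}$ and consider the Leray-Schauder degree
\begin{equation*}
d(\lambda) := \deg_{\mathrm{LS}}\bigl(\mathrm{id} - \Phi(\lambda, \cdot),\, \mathcal{O}_\lambda,\, 0\bigr), \qquad \lambda \geq 0.
\end{equation*}
This is well-defined because $\partial \mathcal{O}_\lambda \subset \{u : (\lambda, u) \in \partial \mathcal{O}\}$ carries no solution, and homotopy invariance applied to the compact family $\{\Phi(\lambda, \cdot)\}$ on the open set $\mathcal{O}$ forces $\lambda \mapsto d(\lambda)$ to be constant on $\mathbb{R}^{+}$. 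Boundedness of $\mathcal{O}$ gives $\mathcal{O}_\lambda = \emptyset$ for all $\lambda$ exceeding the projection of $\mathcal{O}$ onto $\mathbb{R}^{+}$, so $d \equiv 0$. But at $\lambda = 0$ the hypothesis $\Phi(0, \cdot) \equiv 0$ reduces the map to the identity, and $(0, 0) \in \mathcal{C} \subset \mathcal{O}$ yields $0 \in \mathcal{O}_0$, giving $d(0) = \deg_{\mathrm{LS}}(\mathrm{id}, \mathcal{O}_0, 0) = 1$. The resulting contradiction $0 = 1$ shows that $\mathcal{C}$ cannot be bounded, which is precisely the assertion.
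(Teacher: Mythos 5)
Your argument is correct: it is the classical degree-theoretic proof of this result (Whyburn's separation lemma applied to the compact solution set inside a large ball, followed by the generalized homotopy invariance of the Leray--Schauder degree on the slices $\mathcal{O}_\lambda$, with $d\equiv 0$ from boundedness contradicting $d(0)=\deg(\mathrm{id},\mathcal{O}_0,0)=1$). The paper itself offers no proof --- it simply quotes the theorem from Rabinowitz \cite{Rabinowitz-pto-fixo} --- and your proposal essentially reproduces the argument of that cited source, so there is nothing to compare beyond noting that your reconstruction is faithful and complete.
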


We point out that a mapping $\Phi: E \rightarrow E$ is compact if it is continuous and for each bounded subset  $U \subset E$ one has that $\overline{\Phi(U)}$ is compact.

\section{Proof of Theorem \ref{theorem-to-(S)}} 

In this section we will prove Theorem \ref{theorem-to-(S)}. 

\begin{proof}[Proof of Theorem \ref{theorem-to-(S)}] For each $i=1,2$ consider the operators $T_{i}:L^{p_{i}(x)}(\Omega)\rightarrow L^{\infty}(\Omega)$ defined by
	$$T_{i}z(x)=\left\{\begin{array}{rcl}
	\underline{u}_{i}(x)\;\;\;\;\;\;\mbox{if}\;\;\;\;\;\;\;\;\;\;\;\;\;z(x)\leq\underline{u}_{i}(x),\\
	\vspace{.2cm}
	z(x)\;\;\;\;\;\;\mbox{if}\;\;\underline{u}_{i}(x)\leq z(x)\leq\overline{u}_{i}(x),\\
	\vspace{.2cm}
	\overline{u}_{i}(x)\;\;\;\;\;\;\mbox{if}\;\;\;\;\;\;\;\;\;\;\;\;\;z(x)\geq\overline{u}_{i}(x)
	\end{array}
	\right.$$
Since $T_{i}z\in[\underline{u}_{i},\overline{u}_{i}]$ and $\underline{u}_{i},\overline{u}_{i}\in L^{\infty}(\Omega)$ it follows that the operators $T_{i}$ are well-defined.

Let $p'_{i}(x)=\frac{p_{i}(x)}{p_{i}(x)-1}$ and consider the operators  $H_{i}:[\underline{u}_{1},\overline{u}_{1}]\times[\underline{u}_{2},\overline{u}_{2}]\rightarrow L^{p'_{i}(x)}(\Omega)$ defined by
$$H_{i}(u_{1},u_{2})(x)=\frac{f_{i}(x,u_{1}(x),u_{2}(x))|u_{j}|_{L^{q_{i}(x)}}^{\alpha_{i}(x)}}{\mathcal{A}(x,|u_{j}|_{L^{r_{i}(x)}})}+\frac{g_{i}(x,u_{1}(x),u_{2}(x))|u_{j}|_{L^{s_{i}(x)}}^{\gamma_{i}(x)}}{\mathcal{A}(x,|u_{j}|_{L^{r_{i}(x)}})}$$
where $i,j=1,2$ with $i\neq j$ and $|.|_{L^{m(x)}}$ denotes the norm of the space $L^{m(x)}(\Omega).$

Consider in the Banach space $L^{p_{1}(x)}(\Omega)\times L^{p_{2}(x)}(\Omega)$ the norm
$$|(u,v)|_{1,2}=|u|_{L^{p_{1}(x)}}+|v|_{L^{p_{2}(x)}},\;(u,v)\in L^{p_{1}(x)}(\Omega)\times L^{p_{2}(x)}(\Omega).$$

Since $f_{i},g_{i},\mathcal{A}$ are continuous functions, $\mathcal{A}(x,t)>0$ in the compact set $\overline{\Omega}\times\bigl[\underline{\sigma},\overline{\sigma}],$ $T_{i}z_{i}\in[\underline{u}_{i},\overline{u}_{i}]$ for all $ z_{i}\in L^{p_{i}(x)}(\Omega),$ $\underline{u}_{i},\overline{u}_{i}\in L^{\infty}(\Omega)$ and $|w|_{L^{m(x)}}^{\theta(x)}\leq |w|_{L^{m(x)}}^{\theta^{-}}+|w|_{L^{m(x)}}^{\theta^{+}}$ for all $w\in L^{m(x)}(\Omega)$ with $\theta\in L^{\infty}(\Omega)$ it follows that there are constants  $K_{i}>0$  such that
\begin{equation}\label{limitacao-de-Hi}
|H_{i}(T_{1}z_{1},T_{2}z_{2})|\leq K_{i}
\end{equation}
for all $(z_{1},z_{2})\in L^{p_{1}(x)}(\Omega)\times L^{p_{2}(x)}(\Omega).$

Thus by Lebesgue Dominated Convergence Theorem it follows that the mappings $(z_{1},z_{2})\mapsto H_{i}(T_{1}z_{1},T_{2}z_{2})$ are continuous from $L^{p_{1}(x)}(\Omega)\times L^{p_{2}(x)}(\Omega)$ in $L^{p'_{i}(x)}(\Omega), $ $i=1,2.$

From  \cite[Theorem 4.1]{fan-zhang} the operator
$$\Phi:\mathbb{R}^{+}\times L^{p_{1}(x)}(\Omega)\times L^{p_{2}(x)}(\Omega)\rightarrow L^{p_{1}(x)}(\Omega)\times L^{p_{2}(x)}(\Omega)$$
$$(\lambda,z_{1},z_{2})\mapsto\Phi(\lambda,z_{1},z_{2})=(u_{1},u_{2}),$$
where $(u_{1},u_{2})\in W_{0}^{1,p_{1}(x)}(\Omega)\times W_{0}^{1,p_{2}(x)}(\Omega)$ is the unique solution of
$$\left\{\begin{array}{rcl}
-\Delta_{p_{1}(x)} u_{1}=\lambda H_{1}(T_{1}z_{1},T_{2}z_{2})\;\;\mbox{in}\;\;\Omega,\\
-\Delta_{p_{2}(x)} u_{2}=\lambda H_{2}(T_{1}z_{1},T_{2}z_{2})\;\;\mbox{in}\;\;\Omega,\\
\vspace{.2cm}
u=v=0\;\;\mbox{on}\;\;\partial\Omega,
\end{array}
\right.\eqno{(S_{L})}$$
is well-defined.

\textbf{Claim 1: $\Phi$ is compact.} Let $(\lambda_{n},z^{1}_{n},z^{2}_{n})\subset \mathbb{R}^{+}\times L^{p_{1}(x)}(\Omega)\times L^{p_{2}(x)}(\Omega)$ be a bounded sequence and consider $(u^{1}_{n},u^{2}_{n})=\Phi(\lambda_{n},z^{1}_{n},z^{2}_{n}).$ The definition of $\Phi$ imply that
$$\int_{\Omega}|\nabla u^{i}_{n}|^{p_{i}(x)-2}\nabla u_{n}\nabla\varphi=\lambda_{n}\int_{\Omega}H_{i}(T_{1}z^{1}_{n},T_{2}z^{2}_{n})\varphi,\;\forall\;\varphi\in \;W_{0}^{1,p_{i}(x)}(\Omega),$$
where $i,j=1,2$ and $i\neq j.$

Considering the test function $\varphi=u_{n}^{i}$ and using the boundness of  $(\lambda_{n})$ and  the inequality \eqref{limitacao-de-Hi} we get
$$\int_{\Omega}|\nabla u^{i}_{n}|^{p_{i}(x)}\leq\overline{\lambda}K_{i}\int_{\Omega}|u_{n}^{i}|$$
for all $n \in \mathbb{N}$. Here $\overline{\lambda}$ is a constant that does not depend on $n \in \mathbb{N}.$

Since $p_{i}^{-}>1$ the embedding  $L^{p_{i}(x)}(\Omega)\hookrightarrow L^{1}(\Omega)$ holds. Combining such embedding with Poincar\'{e} inequality we get
$$\int_{\Omega}|\nabla u^{i}_{n}|^{p_{i}(x)}\leq CK_{i}\|u_{n}^{i}\|$$
for all $n\in\mathbb{N}.$
Suppose that  $|\nabla u_{n}^{i}|_{L^{p_{i}(x)}}>1.$ Thus by Proposition \ref{norm-property} we have $\|u_{n}^{i}\|^{p^{-}-1}\leq CK_{i}$ for all $n\in\mathbb{N}$ where $C$ is a constant that does not depend on $n.$ Then we conclude that  $(u^{i}_{n})$ is bounded in $W_{0}^{1,p_{i}(x)}(\Omega).$
The reflexivity of  $W_{0}^{1,p_{i}(x)}(\Omega)$ and the compact embedding $W_{0}^{1,p_{i}(x)}(\Omega) \hookrightarrow L^{p_{i}(x)}(\Omega)$ provides the result.

\textbf{Claim 2: $\Phi$ is continuous.} Consider a sequence  $(\lambda_{n},z^{1}_{n},z^{2}_{n})$ in $\mathbb{R}^{+}\times L^{p_{1}(x)}(\Omega)\times L^{p_{2}(x)}(\Omega)$ converging to $(\lambda,z^{1},z^{2})$ in $\mathbb{R}^{+}\times L^{p_{1}(x)}(\Omega)\times L^{p_{2}(x)}(\Omega)$. Define   $(u^{1}_{n},u^{2}_{n})=\Phi(\lambda_{n},z^{1}_{n},z^{2}_{n})$ and $(u^{1},u^{2})=\Phi(\lambda,z^{1},z^{2}).$

Using the definition of $\Phi$ we get
\begin{equation}\label{test1}\int_{\Omega}|\nabla u^{i}_{n}|^{p_{i}(x)-2}\nabla u_{n}^{i}\nabla\varphi=\lambda_{n}\int_{\Omega}H_{i}(T_{1}z^{1}_{n},T_{2}z^{2}_{n})\varphi
\end{equation}
and
\begin{equation}\label{test2}
\int_{\Omega}|\nabla u^{i}|^{p_{i}(x)-2}\nabla u^{i}\nabla\varphi=\lambda\int_{\Omega}H_{i}(T_{1}z^{1},T_{2}z^{2})\varphi
\end{equation}
for all $\varphi\in \;W_{0}^{1,p_{i}(x)}(\Omega)$ where $i,j=1,2$ and $i\neq j.$

Considering $\varphi=(u_{n}^{i}-u^{i})$ in \eqref{test1} and \eqref{test2} and subtracting \eqref{test2} from \eqref{test1} we get

\begin{align*}
	\int_{\Omega}\bigl<|\nabla u_{n}^{i}|^{p_{i}(x)-2}\nabla u_{n}^{i}-|\nabla u^{i}|^{p_{i}(x)-2}\nabla u^{i},\nabla (u_{n}^{i}-u^{i})\bigl>=&\int_{\Omega}\lambda_{n}H(T_{1}z^{1}_{n},T_{2}z_{n}^{2})(u_{n}^{i}-u^{i}) \\
-& \int_{\Omega}\lambda H(T_{1}z^{1},T_{2}z^{2})\bigl](u_{n}^{i}-u^{i}).
\end{align*}

Using H\"{o}lder inequality we have

\begin{align*}
\left| \int_{\Omega} \bigl< |\nabla u^{i}_{n}|^{p_i (x)-2} \nabla u^{i}_{n} - |\nabla u|^{p_i (x)-2} \nabla u^{i}, \nabla (u^{i}_{n} - u) \bigl> \right| &\leq |u^{i}_{n} - u^{i}|_{p_i (x)}  \\
 \times |\lambda_n H_{i} (T_1 z^{1}_{n}, T_2 z^{2}_{n}) - & \lambda H_{i} (T_1 z^{1}, T_2 z^{2})|_{p^{\prime}_{i}(x)}
\end{align*}

The previous arguments assure us that $(u^{i}_{n})$ is bounded in $W^{1,p_i(x)}_{0}(\Omega)$. Since $\lambda_n \rightarrow \lambda$ and $H_i (T_1 z^{1}_{n}, T_2 z^{2}_{n}) \rightarrow H_i(T_1 z^1, T_2 z^2)$
in $L^{p^{\prime}_{i}(x)}(\Omega), i=1,2$ we have

$$\left| \int_{\Omega} \bigl< |\nabla u^{i}_{n}|^{p_i (x)-2} \nabla u^{i}_{n} - |\nabla u|^{p_i (x)-2} \nabla u^{i}, \nabla (u^{i}_{n} - u) \bigl>\right| \rightarrow 0. $$
Therefore $u^{i}_{n} \rightarrow u^{i}$ in $L^{p_{i}(x)}(\Omega)$ for $i=1,2$ which proves the continuity of $\Phi.$

Combining the fact  that $\Phi(0,z_{1},z_{2})=(0,0,0)$ for all $(z_1,z_2) \in L^{p_{1}(x)}(\Omega) \times L^{p_{2}(x)}(\Omega) $ with the previous claims we have by Theorem \ref{Rabinowitz} that the equation $\Phi(\lambda,u,v) = (u,v) $ possesses an unbounded continuum  $\mathcal{C}\subset\mathbb{R}^{+}\times L^{p_{1}(x)} (\Omega) \times L^{p_{2}(x)} (\Omega) $ of solutions with $(0,0,0)\in \mathcal{C}.$

\textbf{Claim 3: $\mathcal{C}$ is bounded with respect to the parameter $\lambda$.} Suppose that there exists $\lambda^* >0$ such that $\lambda \leq \lambda^*$ for all $(\lambda,u^1,u^2) \in \mathcal{C}.$ For $(\lambda,u^1,u^2) \in \mathcal{C}$ the definition of $\Phi$ imply that
$$\left\{\begin{array}{rcl}
-\Delta_{p_{1}(x)} u_{1}=\lambda H_{1}(T_{1}u_{1},T_{2}u_{2})\;\;\mbox{in}\;\;\Omega,\\
-\Delta_{p_{2}(x)} u_{2}=\lambda H_{2}(T_{1}u_{1},T_{2}u_{2})\;\;\mbox{in}\;\;\Omega,\\
\vspace{.2cm}
u_{1}=u_{2}=0\;\;\mbox{on}\;\;\partial\Omega.
\end{array}
\right.\leqno{(P_{\lambda})}$$
Using the test function $u_{i}$  in $(P_{\lambda})$ and considering \eqref{limitacao-de-Hi} we get
$$\int_{\Omega}|\nabla u_{i}|^{p_{i}(x)}\leq\lambda^{*}C|u_{i}|_{L^{p(x)}}.$$

Suppose that $|\nabla u_{i}|_{L^{p(x)}}>1.$ Then using Proposition  \ref{norm-property} and the Poincar\'{e} inequality we obtain that
$$|u_{i}|_{L^{p_{i}(x)}}^{p_{i}-1}\leq\lambda^{*}C.$$
Thus  $\mathcal{C}$ is bounded in $\mathbb{R}^{+}\times L^{p_{1}(x)}(\Omega)\times L^{p_{2}(x)}(\Omega),$ which is a contradiction.  

Considering $\lambda=1,$ by $(P_{\lambda})$ we have

\begin{equation} \label{solution-trun}
\begin{aligned}
\int_{\Omega}|\nabla u_{i}|^{p_{i}(x)-2}\nabla u_{i}\nabla\varphi&= \int_{\Omega}\left(\frac{f_{i}(x,T_{1}u_{1},T_{2}u_{2})|T_{j}u_{j}|_{L^{q_{i}(x)}}^{\alpha_{i}(x)}}{\mathcal{A}(x,|T_{j}u_{j}|_{L^{r_{i}(x)}})}\right)\varphi  \\
&+ \int_{\Omega}\left(\frac{g_{i}(x,T_{1}u_{1},T_{2}u_{2})|T_{j}u_{j}|_{L^{s_{i}(x)}}^{\gamma_{i}(x)}}{\mathcal{A}(x,|T_{j}u_{j}|_{L^{r_{i}(x)}})}\right)\varphi,
\end{aligned}
\end{equation}
for all $\varphi\in W_{0}^{1,p_{i}(x)}(\Omega)$ where $i,j=1,2$ with $i\neq j.$

We claim that
$$u_{i}\in[\underline{u}_{i},\overline{u}_{i}],\;i=1,2.$$
In order to verify the claim define
$$L_{1}(\underline{u}_{1}-u_{1})_{+}:=\int_{\{\underline{u}_{1}\geq u_{1}\}}\bigl<|\nabla \underline{u}_{1}|^{p_{1}(x)-2}\nabla\underline{u}_{1}-|\nabla u_{1}|^{p_{1}(x)-2}\nabla u_{1},\nabla(\underline{u}_{1}-u_{1})\bigl>.$$
Since $T_{2}u_{2}\in[\underline{u}_{2},\overline{u}_{2}],$ $\underline{u}_{i}(x)>0\;\mbox{a.e in}\;\Omega,$ considering $i=1, j=2,$ $w=T_{2}u_{2}$ and $\varphi=(\underline{u}_{1}-u_{1})_{+}$ in the first inequality of \eqref{eq2.1} and combining with \eqref{solution-trun} we get

\begin{eqnarray*}
	L_{1}(\underline{u}_{1}-u_{1})_{+}\displaystyle&\leq&\int_{\{\underline{u}_{1}\geq u_{1}\}}\frac{f_{1}(x,\underline{u}_{1},T_{2}u_{2})
		(|\underline{u}_{2}|_{L^{q_{1}(x)}}^{\alpha_{1}(x)}-|T_{2}u_{2}|_{L^{q_{1}(x)}}^{\alpha_{1}(x)})}{\mathcal{A}(x,|T_{2}u_{2}|_{L^{r_{1}(x)}})}(\underline{u}_{1}-u_{1})\\
	&+&\int_{\{\underline{u}_{1}\geq u_{1}\}}\frac{g_{1}(x,\underline{u}_{1},T_{2}u_{2})
		(|\underline{u}_{2}|_{L^{s_{1}(x)}}^{\gamma_{1}(x)}-|T_{2}u_{2}|_{L^{s_{1}(x)}}^{\gamma_{1}(x)})}{\mathcal{A}(x,|T_{2}u_{2}|_{L^{r_{1}(x)}})}(\underline{u}_{1}-u_{1}),
\end{eqnarray*}
that is,
$$\int_{\{\underline{u}_{1}\geq u_{1}\}}\bigl<|\nabla \underline{u}_{1}|^{p_{1}(x)-2}\nabla\underline{u}_{1}-|\nabla u_{1}|^{p_{1}(x)-2}\nabla u_{1},\nabla(\underline{u}_{1}-u_{1})\bigl>\leq0.$$
Therefore $\underline{u}_{1}\leq u_{1}.$ The same reasoning imply the other inequalities. Since $u_{i}\in[\underline{u}_{i},\overline{u}_{i}],$ we have $T_{i}u_{i}=u_{i}.$ Therefore the pair $(u_{1},u_{2})$ is a weak positive solution of $(S).$


\end{proof}

\section{Applications}

The main goal of this section is to apply Theorem \ref{theorem-to-(S)} to some classes of nonlocal problems.

\subsection{A sublinear problem:}In this section, we use Theorem \ref{theorem-to-(S)} to study the nonlocal problem
$$
\left \{
\begin{array}{rclcl}
-\mathcal{A}(x,|v|_{L^{r_1(x)}})\Delta_{p_1(x)}u &=&(u^{\beta_1(x)} + v^{\gamma_1(x)})|v|_{L^{q_1(x)}}^{\alpha_1(x)} \ \mbox{in} \   \Omega, \\
-\mathcal{A}(x,|u|_{L^{r_2(x)}})\Delta_{p_2(x)}v &=&(u^{\beta_2(x)} + v^{\gamma_2(x)})|u|_{L^{q_2(x)}}^{\alpha_2(x)} \ \mbox{in} \   \Omega, \\
u=v&=& 0\ \mbox{on}  \ \partial \Omega.
\end{array}
\right.\eqno{(S_s)}
$$

The above problem in the case $p_1(x)\equiv p_1(x) \equiv 2,$ was considered  recently in  \cite{gel-giovany}. The  result of this section generalizes \cite[Theorem 6]{gel-giovany}.

\begin{theorem}\label{teo-sublinear} Suppose that $p_i,q_i,r_i,s_i,i=1,2$ satisfy $(H)$ and let  $\alpha_i,\beta_i \in L^{\infty}(\Omega), i=1,2.$ Consider also that 
	$$0< \alpha_1^{+}+\gamma_1^{+}<p_i^{-}-1 , \ \ \ 0<\frac{\alpha^{+}_1}{p^{-}_{2}-1} + \frac{\beta^+_1}{p^{-}_{1}-1}  <1$$
and
$$ 0<   \alpha^{+}_{2} + \gamma^{+}_{2}  < p^{-}_{i}-1, \ \ \  0<\frac{\alpha^+_2}{p^{-}_{1} -1} + \frac{\beta^{+}_{2}}{p^{-}_{2}-1} < 1$$
for $i=1,2.$
Let  $a_{0}>0$ be a positive constant. Suppose that one of the conditions holds.
\vspace{0.2cm}		
	
		\noindent{\bf $(A_{1})$} $\mathcal{A}(x,t)\geq a_{0}\;\text{in}\;\overline{\Omega}\times[0,\infty), $

\vspace{0.2cm}

\noindent{\bf $(A_{2})$} $0<\mathcal{A}(x,t)\leq a_{0}\;\text{in}\;\overline{\Omega}\times(0,\infty)$ and $\lim_{t \rightarrow + \infty}\mathcal{A}(x,t)=a_{\infty}>0$ uniformly in $\Omega.$ 

\vspace{0.2cm}	

Then $(S_s)$ has a positive solution.
\end{theorem}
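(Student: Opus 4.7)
The plan is to apply Theorem \ref{theorem-to-(S)} with $f_i(x,u,v) = u^{\beta_i(x)} + v^{\gamma_i(x)}$ and $g_i \equiv 0$, so the task reduces to constructing a pair $(\underline{u}_i,\overline{u}_i)$ satisfying \eqref{eq2.1} with $\underline{u}_i>0$ a.e.\ in $\Omega$ and checking the positivity of $\mathcal{A}$ on the relevant compact set. The supersolution will come from a large parameter $M$ and the subsolution from a small parameter $\eta$; the hypotheses on the exponents are tailored precisely so that both choices coexist.

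For the supersolution I take $\overline{u}_i := z_M^{(i)}$, the solution of $-\Delta_{p_i(x)} z = M$ with $z=0$ on $\partial\Omega$ provided by Lemma \ref{Fan}. For $M \geq \rho_0$ the lemma gives $|\overline{u}_i|_{L^\infty}\leq C^{*}M^{1/(p_i^- - 1)}$, hence $|\overline{u}_j|_{L^{q_i(x)}}^{\alpha_i(x)}\leq C\,M^{\alpha_i^+/(p_j^- - 1)}$ via the embedding $L^\infty\hookrightarrow L^{q_i(x)}(\Omega)$. A uniform lower bound on $\mathcal{A}$ on the compact set $\overline{\Omega}\times[\underline{\sigma},\overline{\sigma}]$ is provided by $(A_1)$ directly and, in case $(A_2)$, by continuity together with $\mathcal{A}(\cdot,t)\to a_\infty>0$ uniformly. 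The supersolution inequality \eqref{eq2.1} then reduces to
$$M \geq C\bigl(M^{\frac{\beta_i^+}{p_i^- - 1}+\frac{\alpha_i^+}{p_j^- - 1}} + M^{\frac{\alpha_i^+ + \gamma_i^+}{p_j^- - 1}}\bigr),$$
which holds for $M$ large because the two hypotheses of the theorem make both exponents strictly less than $1$.

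For the subsolution, the condition $\alpha_i^+/(p_j^- - 1) + \beta_i^+/(p_i^- - 1)<1$ with $\alpha_i^+\geq 0$ forces $\beta_i^+<p_i^- - 1$, while $\alpha_i^+ + \gamma_i^+<p_i^- - 1$ forces $\gamma_i^+<p_i^- - 1$; hence, for every $\eta_i>0$, the scalar sublinear problem $-\Delta_{p_i(x)} \phi = \eta_i\,\phi^{\sigma_i(x)}$ in $\Omega$ with $\phi=0$ on $\partial\Omega$ (where $\sigma_1 = \beta_1$ and $\sigma_2 = \gamma_2$, namely the self-exponent appearing in the $i$-th inequality of \eqref{eq2.1}) admits a positive weak solution $\phi_i^{\eta_i}\in W_0^{1,p_i(x)}(\Omega)\cap L^\infty(\Omega)$ by the scalar arguments of \cite{gel-gio-le}. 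Setting $\underline{u}_i := \phi_i^{\eta_i}$, dropping the nonnegative cross-term in \eqref{eq2.1}, and dividing by $\underline{u}_i^{\sigma_i(x)}>0$ reduce the subsolution inequality to
$$\eta_i \leq \frac{|\underline{u}_{j}|_{L^{q_i(x)}}^{\alpha_i(x)}}{\mathcal{A}(x,|w|_{L^{r_i(x)}})} \quad \text{a.e.\ in } \Omega,\ \forall\,w\in[\underline{u}_{j},\overline{u}_{j}],\ j\neq i,$$
whose right-hand side is bounded below by a positive constant because $\underline{u}_j>0$ a.e.\ and $\mathcal{A}$ is bounded on the compact set above. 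Multiplying the two hypotheses $\alpha_i^+/(p_j^- - 1)+\beta_i^+/(p_i^- - 1)<1$ for $i=1,2$ gives the budget $\alpha_1^+\alpha_2^+<(p_1^- - 1-\sigma_1^+)(p_2^- - 1-\sigma_2^+)$ that ensures a compatible choice of small $\eta_1,\eta_2$.

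Finally, Proposition \ref{PC} yields $\underline{u}_i\leq\overline{u}_i$ as soon as $M$ exceeds $\eta_i|\underline{u}_i|_{L^\infty}^{\sigma_i^+}$, and $\underline{u}_i>0$ a.e.\ forces $\underline{\sigma}>0$, so the hypothesis on $\mathcal{A}$ is met in both $(A_1)$ and $(A_2)$; Theorem \ref{theorem-to-(S)} then produces the desired positive solution of $(S_s)$. The main obstacle is the subsolution construction: as emphasized in the introduction, the $p(x)$-Laplacian lacks homogeneity and a first eigenfunction may be unavailable, so the classical ``$\epsilon\phi_1$'' subsolution of the constant-$p$ case cannot be used, and one must instead pass through a scalar sublinear auxiliary problem to obtain a $\underline{u}_i$ whose $p_i(x)$-Laplacian matches the boundary decay of the right-hand side.
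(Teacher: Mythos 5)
Your supersolution is the paper's own: $\overline{u}_i$ is the solution of $-\Delta_{p_i(x)}z=M$ from Lemma \ref{Fan}, and the inequality closes because the exponents $\frac{\beta_i^{+}}{p_i^{-}-1}+\frac{\alpha_i^{+}}{p_j^{-}-1}$ and $\frac{\alpha_i^{+}+\gamma_i^{+}}{p_j^{-}-1}$ are below $1$. The gap is in the subsolution. After dropping the cross term and dividing, you arrive at the coupled conditions $\eta_1\leq c\,|\phi_2^{\eta_2}|_{L^{q_1(x)}}^{\alpha_1(x)}$ and $\eta_2\leq c\,|\phi_1^{\eta_1}|_{L^{q_2(x)}}^{\alpha_2(x)}$, and you assert that an exponent ``budget'' makes a compatible choice of small $\eta_1,\eta_2$ possible. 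That budget is only usable if you know quantitatively how fast $|\phi_j^{\eta_j}|$ decays as $\eta_j\to0$, e.g.\ $|\phi_j^{\eta_j}|\gtrsim\eta_j^{1/(p_j^{-}-1-\sigma_j^{+})}$. For the $p(x)$-Laplacian that scaling is precisely what fails: $\phi_j^{\eta_j}$ is \emph{not} $\eta_j^{1/(p_j^{-}-1-\sigma_j^{+})}\phi_j^{1}$, no such lower bound is proved or cited, and without it the loop never closes (fix $\eta_2$, then $\eta_1$ must be small, then $|\phi_1^{\eta_1}|^{\alpha_2^{\pm}}$ may be far too small to dominate $\eta_2$). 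This is exactly the non-homogeneity obstacle you name at the end, but invoking the scalar auxiliary problem does not resolve it — it only relocates it. Two smaller slips: multiplying the two displayed hypotheses yields $\alpha_1^{+}\alpha_2^{+}<(p_1^{-}-1-\beta_1^{+})(p_2^{-}-1-\beta_2^{+})$, not the bound with $\gamma_2^{+}$ that your choice $\sigma_2=\gamma_2$ requires; and in the supersolution for $i=2$ the cross term is $u^{\beta_2(x)}\leq \overline{u}_1^{\beta_2(x)}\leq (KM^{1/(p_1^{-}-1)})^{\beta_2^{+}}$, so the relevant exponent is $(\beta_2^{+}+\alpha_2^{+})/(p_1^{-}-1)$.

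The paper sidesteps the coupling entirely: both subsolutions are the same explicit boundary-layer profile $\mu\phi_i$ built from the distance function (taken from \cite{Liu}), scaled by the single parameter $\mu=e^{-ak}$ with $\sigma=\frac{1}{k}\ln 2$. Then $-\Delta_{p_i(x)}(\mu\phi_i)\leq0$ for $d(x)<\sigma$ and $d(x)>2\delta$, while on the intermediate annulus one has the explicit upper bound $\tilde C(k\mu)^{p_i^{-}-1}|\ln k\mu|$ against the explicit lower bound $C_0\mu^{\alpha_i^{+}+\beta_i^{+}}/\mathcal{A}_\lambda$ for the right-hand side (using $\phi_i\geq1$ there), and the condition $\alpha_i^{+}+\beta_i^{+}<p_i^{-}-1$ closes the estimate via the limit \eqref{lhopital}; since both components carry the same power of the same $\mu$, no compatibility problem between two independent small parameters ever arises. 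To repair your argument you would have to prove the missing quantitative lower estimate for $\phi_j^{\eta_j}$, and in practice that is done by the very distance-function construction the paper uses.
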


\begin{proof} Suppose that $(A_1)$  holds, that is, $\mathcal{A}(x,t)\geq a_{0} $ in $\overline{\Omega} \times [0,+\infty)$. We will start by constructing  $(\overline{u}, \overline{v})$. Let $\lambda>0$ and consider  $z_{\lambda}\in W_{0}^{1,p_1(x)}(\Omega)\cap L^{\infty}(\Omega)$ and $y_{\lambda}\in W_{0}^{1,p_2(x)}(\Omega)\cap L^{\infty}(\Omega)$  the unique  solutions of \eqref{probl-linear-lambda} where $\lambda$ will be chosen later. 
	
For $\lambda >0$ sufficiently large, by Lemma \ref{Fan} there is a constant $K>1$ that does not depend on $\lambda$  such that
\begin{equation}\label{desig1-p-supsol}
0<z_{\lambda}(x)\leq K\lambda^{\frac{1}{p_{1}^{-}-1}}\;\text{in}\;\Omega,
\end{equation}	
and
\begin{equation}\label{ddesig1-p-supsol}
0<y_{\lambda}(x)\leq K\lambda^{\frac{1}{p_{2}^{-}-1}}\;\text{in}\;\Omega.
\end{equation}

Since $\alpha_1^{+}+\gamma_1^{+}<p_2^{-}-1$ and $ \frac{\alpha^{+}_{1}}{p^{-}_{2}-1} + \frac{\beta^+_1}{p^{-}_{1}-1}  <1$ we can choose $\lambda>1$ such that \eqref{desig1-p-supsol} and \eqref{ddesig1-p-supsol} occur and
\begin{equation}\label{desig2-p-supsol}
\frac{1}{a_{0}}(K^{\beta_{1}^{+}}\lambda^{\frac{\beta_{1}^{+}}{p^{-}_{1}-1}  + \frac{\alpha_{1}^{+}}{p^{-}_{2}-1} } + K^{\gamma_{1}^{+}}\lambda^{\frac{\alpha_{1}^{+} + \gamma_{1}^{+}}{p^{-}_{2}-1}  } )\max\{|K|_{L^{q_1(x)}}^{\alpha^{-}},|K|_{L^{q_1(x)}}^{\alpha^{+}}\}\leq\lambda.
\end{equation}	

By \eqref{desig1-p-supsol}, \eqref{ddesig1-p-supsol} and \eqref{desig2-p-supsol}, we get $$\frac{1}{a_{0}}(z_{\lambda}^{\beta_1(x)}+ w^{\gamma_1(x)})|y_{\lambda}|_{L^{q_1(x)}}^{\alpha_1(x)}\leq   \lambda, w \in [0, y_{\lambda}] $$
Thus for $w \in [0, y_{\lambda}]$
\begin{equation*}
\begin{aligned}
\left\{\begin{array}{rcl}
-\Delta_{p_1(x)}z_{\lambda}&\geq&\dfrac{1}{\mathcal{A}(x,|w|_{L^{r_1(x)}})}(z_{\lambda}^{\beta_1(x)} + w^{\gamma_1(x)})|y_{\lambda}|_{L^{q_1(x)}}^{\alpha_1(x)}\;\;\mbox{in}\;\;\Omega,\\
\vspace{.2cm}
z_{\lambda}&=&0\;\;\mbox{on}\;\;\partial\Omega.
\end{array}
\right.
\end{aligned}
\end{equation*}

Considering, if necessary,  a larger $\lambda >0$  the previous reasoning imply 
\begin{equation*}
\begin{aligned}
\left\{\begin{array}{rcl}
-\Delta_{p_2(x)}y_{\lambda}&\geq&\dfrac{1}{\mathcal{A}(x,|w|_{L^{r_2(x)}})}(w^{\beta_2(x)} + {y_{\lambda}}^{\gamma_2(x)})|z_{\lambda}|_{L^{q_2(x)}}^{\alpha_2(x)}\;\;\mbox{in}\;\;\Omega,\\
\vspace{.2cm}
y_{\lambda}&=&0\;\;\mbox{on}\;\;\partial\Omega,
\end{array}
\right.
\end{aligned}
\end{equation*}
for all $w \in [0,z_{\lambda}].$

Now we will construct $(\underline{u},\underline{v}),i=1,2.$ Since $\partial \Omega$ is $C^2 ,$ there is a constant $\delta >0$ such that $d \in C^{2}(\overline{\Omega_{3 \delta}})$ and $|\nabla d(x)| \equiv 1,$ where $d(x):= dist(x,\partial \Omega)$ and $\overline{ \Omega_{3 \delta}}:=\{x \in  \overline{\Omega}; d(x) \leq 3 \delta\}$.
From \cite[Page 12]{Liu}, we have that,  for  $\sigma \in (0, \delta)$ suffciently small,  the function $\phi_i=\phi_i(k,\sigma),i=1,2$ defined by


\begin{equation*}
\phi_i(x)=\left\{\begin{array}{lcl}
e^{kd(x)}-1 & \text{ if } & d(x)<\sigma,\\ 
e^{k\sigma}-1+\int_{\sigma}^{d(x)}ke^{k\sigma}\Big(\frac{2\delta-t}{2\delta-\sigma}\Big)^{\frac{2}{p^{-}_{i}-1}}dt & \text{ if } &\sigma\leq d(x)<2\delta,\\
e^{k\sigma}-1+\int_{\sigma}^{2\delta}ke^{k\sigma}\Big(\frac{2\delta-t}{2\delta-\sigma}\Big)^{\frac{2}{p^{-}_{i}-1}}dt & \text{ if } & 2\delta \leq d(x),
\end{array}
\right.
\end{equation*}
belongs to  $ C^{1}_{0}(\overline{\Omega})$, where $k>0$ is an arbitrary number. They also proved that
$$-\Delta_{p_i(x)}(\mu\phi_i)=\begin{cases}
-k(k\mu e^{kd(x)})^{p_i(x)\!-1}\Big[(p_i(x)\!\!-1)+(d(x)\!\!+\frac{\ln k\mu}{k})\nabla p_i(x)\nabla d(x)\\
+\frac{\Delta d(x)}{k}\Big] \;\; \mbox{ if}\quad d(x)<\sigma,\\
\Big\{\frac{1}{2\delta-\sigma}\frac{2(p_i(x)-1)}{p^{-}_{i}-1}\!-\!\Big(\frac{2\delta-d(x)}{2\delta-\sigma}\Big)\Big[\ln k\mu e^{k\sigma}\Big(\frac{2\delta-d(x)}{2\delta-\sigma}\Big)^{\frac{2}{p^{-}_{i}-1}}\nabla p_i(x)\nabla d(x)\\
+\Delta d(x)\Big]\Big\}
(k\mu e^{k\sigma})^{p_i(x)-1}\Big(\frac{2\delta-d(x)}{2\delta-\sigma}\Big)^{\frac{2(p_i(x)-1)}{p^{-}_{i}-1}-1}\;\; \mbox{ if}\quad \sigma < d(x)<2\delta,\\
0\;\; \mbox{ if}\quad 2\delta<d(x)
\end{cases}
$$
for all $\mu >0$ and $i=1,2.$

Define $\mathcal{A}_{\lambda}:=\max\bigl\{\mathcal{A}(x,t):(x,t)\in\overline{\Omega}\times\bigl[0,\max\{|y_{\lambda}|_{L^{r_1(x)}}|z_{\lambda}|_{L^{r_2(x)}}\}\bigl]\bigl\}.$ We have $$a_{0}\leq\mathcal{A}(x,|w|_{L^{r_1(x)}})\leq \mathcal{A}_{\lambda}\;\;\mbox{in}\;\Omega $$
for all $w \in [0,y_{\lambda}].$

Let $\sigma=\frac{1}{k}\ln 2$ and $\mu=e^{-ak}$ where $$a=\frac{\min\{p^{-}_{1}-1, p^{-}_{2}-1\}}{\max\{\max_{\overline{\Omega}}|\nabla p_1|+1, \max_{\overline{\Omega}}|\nabla p_2|+1 \}}.$$
 Then $e^{k\sigma}=2$ and $k\mu \leq 1 $ if $k>0$ is sufficienltly large. 
 
Let $x \in \Omega$ with $d(x) < \sigma $. If $k>0$ is large enough we have $|\nabla d (x)| = 1$ and then we have
\begin{equation}\label{negative}
\begin{aligned}
\left|d(x) + \frac{\ln(k\mu)}{k}\right| |\nabla p_1(x)||\nabla d(x)| \leq& \left(|d(x)| + \frac{|\ln(k\mu)|}{k}\right)|\nabla p_1(x)|\\
\leq & \left(\sigma - \frac{\ln(k \mu)}{k}\right)|\nabla p_1(x)| \\
=&\left( \frac{\ln2}{k} - \frac{\ln k}{k}\right)|\nabla p_1(x)| +a |\nabla p_1(x)|\\
<& p^{-}_{1}-1.
\end{aligned}
\end{equation}
Note also that there exists a constant $A>0$ that does not depend on $k$ such that $|\Delta d (x)|  < A$ for all $x \in \overline{\partial \Omega_{3\delta}}$. Using the last inequality and the expression of $-\Delta_{p_1(x)}(\mu \phi),$ we get $-\Delta_{p_1(x)} (\mu \phi_1) \leq 0$ for $x \in \Omega$ with $ d(x) < \sigma$ or $d(x) > 2\delta$ for $k>0$ large enough. Therefore
\begin{align*}
-\Delta_{p_1(x)}(\mu\phi_1)\leq&0\leq\frac{1}{\mathcal{A}_{\lambda}}(\mu\phi_1)^{\beta_1(x)}|\mu\phi_2|_{L^{q_1(x)}}^{\alpha_1(x)}\\
\leq & \frac{1}{\mathcal{A}_{\lambda}}((\mu \phi_1)^{\beta_1(x)} + w^{\gamma_1(x)})|\mu \phi_2|_{L^{q_1(x)}}^{\alpha_1(x)}
\end{align*}
for all $w \in L^{\infty}(\Omega)$ with $w \geq \mu\phi_2$ and $d(x) < \sigma$ or $ 2\delta < d(x)$ and using the idea of the proof of estimate (3.10)  from \cite{Liu} we get
\begin{align}\label{desi1-p-subsol}
-\Delta_{p_1(x)}(\mu\phi_1)\leq&  \tilde{C}(k\mu)^{p^{-}_{1}-1}|\ln k\mu| \nonumber\\
=&  \tilde{C}(k\mu)^{p^{-}_{1}-1}\left|\ln \frac{k}{e^{ak}}\right| \;\text{ if }\;\sigma < d(x)<2\delta.
\end{align}	

From the proof of \cite[Theorem 2]{gel-gio-le} and the fact that $\alpha^{+}_{1} + \gamma^{+}_{1} < p^{-}_{1} -1$ we get
\begin{equation}\label{lhopital}
\displaystyle\lim_{k \rightarrow + \infty} \displaystyle\frac{\tilde{C} k^{p^{-}_{1}-1}}{e^{ak(p^{-}_{1}-1 - (\alpha^{+}_{1}  + \gamma^{+}_{1}))}} \left| \ln \displaystyle\frac{k}{e^{a k}}\right| = 0.
\end{equation}

 Note that $\phi_1 (x) \geq 1$ if $\sigma\leq d(x)<2\delta$ because $\phi_1(x) \geq e^{k\sigma} -1 $ and $e^{k\sigma} =2$ for all $k>0$. Thus, there  is a constant $C_0 > 0$ that does not depend on $k$ such that $| \phi_2|^{\alpha_1(x)}_{L^{q_1(x)}(\Omega)}\geq C_0$ if $\sigma <d(x)< 2\delta$. By \eqref{lhopital}, we can choose  $k>0$ large enough such that
\begin{equation}\label{desig2-p-subsol}
\frac{\tilde{C}k^{p^{-}_{1}-1}}{e^{ak[(p^{-}_{1}-1)-(\alpha^{+}_{1}+\beta^{+}_{1})]}}\Big|\ln\frac{k}{e^{a k}}\Big|\leq\frac{C_{0}}{\mathcal{A}_{\lambda}}.
\end{equation}

Therefore from \eqref{desi1-p-subsol} and \eqref{desig2-p-subsol}  we have
$$-\Delta_{p_1(x)}(\mu\phi_1)\leq \frac{1}{\mathcal{A}_{\lambda}}((\mu\phi_1)^{\beta_1(x)}+w^{\gamma_1(x)})|\mu\phi_2|_{L^{q_1(x)}}^{\alpha_1(x)},$$
for all $w\in L^{\infty}(\Omega)$ with $w \geq \mu \phi_2$ and $\sigma <d(x)<2\delta$ for $k>0 $ large enough. Therefore
$$ -\Delta_{p_1(x)}(\mu\phi_1)\leq \frac{1}{\mathcal{A}_{\lambda}}((\mu\phi_1)^{\beta_1(x)} + w^{\gamma_1(x)})|\mu \phi_2|_{L^{q_1(x)}}^{\alpha_1(x)} \ \textrm{in} \ \Omega.$$

Fix $k>0$ satisfying the above property and the inequality $- \Delta_{p_1(x)} (\mu \phi_1) \leq 1.$ For $\lambda >1 $ we have $- \Delta_{p_1(x)} (\mu \phi_1) \leq  - \Delta_{p_1(x)} z_{\lambda}$. Therefore $\mu \phi \leq z_{\lambda}.$ 

Since $ \alpha^{+}_{2} + \gamma^{+}_{2} < p^{-}_{2}-1 $, a similar reasoning imply that there is $\mu>0$ small such that
$$- \Delta_{p_{2}(x)}(\mu \phi_2)  \leq  \frac{1}{\mathcal{A}(x, |w|_{L^{r_2}(x)})} (w^{\beta_2} + (\mu\phi_2)^{\gamma_2})|\mu \phi_1|^{\alpha_2 (x)}_{L^{q_2(x)}(\Omega)} \ \textrm{in} \ \Omega$$
for all $w \in L^{\infty}(\Omega)$ with $w \geq \mu \phi_1$ and that $\mu_2 \phi \leq y_{\lambda}.$ The first part of the result is proved.

Now suppose that $0<\mathcal{A}(x,t)\leq a_{0}$ in $\overline{\Omega}\times(0,\infty).$ Let $\delta, \sigma, \mu, a, \lambda, z_{\lambda}, y_{\lambda}$ and $\phi_{i},i=1,2$ as before. From the previous arguments there exist $k>0$ large enough and  $\mu>0$ small such that
\begin{equation}\label{a_01}
-\Delta_{p_1(x)}(\mu\phi_1)\leq1 \text{,}\;\;-\Delta_{p_1(x)}(\mu\phi)\leq \frac{1}{a_{0}}((\mu\phi_1)^{\beta_1(x)}+w^{\gamma_1(x)})|\mu \phi_2|_{L^{q_1(x)}}^{\alpha_1(x)}\;\;\text{in}\;\Omega
\end{equation}
for all $w \in [\mu \phi_2,y_{\lambda}]$
and also that
\begin{equation}\label{a_02}
-\Delta_{p_2(x)}(\mu\phi_2)\leq1 \text{,}\;\;-\Delta_{p_2(x)}(\mu\phi_2)\leq \frac{1}{a_{0}}(w^{\beta_2(x)}+(\mu\phi_2)^{\gamma_2(x)})|\mu\phi_1|_{L^{q_2(x)}}^{\alpha_2(x)}\;\;\text{in}\;\Omega
\end{equation}
for all $w \in [\mu \phi_1,z_{\lambda}]$.

Since $\lim_{t\rightarrow\infty}\mathcal{A}(x,t)=a_{\infty}>0$ uniformly in $\Omega$ there is a large constant  $a_{1}>0$ such that $\mathcal{A}(x,t)\geq\frac{a_{\infty}}{2}$ em $\overline{\Omega}\times(a_{1},\infty).$ Let $$m_{k}:=\min\big\{\mathcal{A}(x,t): (x,t) \in \overline{\Omega}\times[\min\{|\mu\phi_1|_{L^{r_1(x)}},|\mu\phi_2|_{L^{r_2(x)}}\}, a_{1}] \big\}>0$$
and $\mathcal{A}_{k}:=\min\big\{m_k,\frac{a_{\infty}}{2}\big\},$ we have 
$\mathcal{A}(x,t)\geq\mathcal{A}_{k}\; \text{in}\; \overline{\Omega}\times[\min\{|\mu\phi_1|_{L^{r_1(x)}},|\mu\phi_2|_{L^{r_2(x)}}\},\infty).$

Fix $k>0$ satisfying \eqref{a_01} and \eqref{a_02}. Let $\lambda>1$ such that  \eqref{desig1-p-supsol}  and \eqref{ddesig1-p-supsol} ocurrs and 
$$\frac{1}{\mathcal{A}_{k}}\left(K^{\beta^{+}_{1}}\lambda^{\frac{\beta^{+}_{1}}{p^{-}_{1}-1} + \frac{\alpha_{1}^{+}}{p^{-}_{2}-1}} + K^{\gamma^{+}_{1}} \lambda^{\frac{\alpha^{+}_{1} + \gamma^{+}_{1}}{p^{-}_{2}-1}}\right)\max\{|K|_{L^{q_1(x)}}^{\alpha^{-}_{1}},|K|_{L^{q_1(x)}}^{\alpha^{+}_{1}}\}\leq\lambda$$
and
$$ \frac{1}{\mathcal{A}_k}\left(K^{\beta^{+}_{2}} \lambda^{\frac{\beta^{+}_{2} + \alpha^{+}_{2}}{p^{-}_{1} -1}} + K^{\gamma^{+}_{2}} \lambda^{\frac{\gamma^{+}_{2}}{p^{-}_{2} - 1} + \frac{\alpha^{+}_{2}}{p^{-}_{1}-1}}\right) \max\{|K|^{\alpha^{+}_{2}}_{L^{q_2(x)}}, |K|^{\alpha_{2}^{-}}_{L^{q_2(x)}}\} \leq \lambda$$
where $K>1$ is a constant that does not depend on $k$ and $\lambda$ (see Lemma \ref{Fan}).  Therefore we have
$$-\Delta_{p_1(x)}z_{\lambda}\leq \frac{1}{\mathcal{A}(x,|w|_{L^{r_1(x)}})}(z_{\lambda}^{\beta_1(x)} + w^{\gamma_1(x)})|y_{\lambda}|_{L^{q_1(x)}}^{\alpha_1(x)}\; \text{in}\; \Omega, w \in [\mu \phi_2, y_{\lambda}].$$
Arguing as before and considering a suitable choice for $\lambda$ and $k$ we get
$$-\Delta_{p_2(x)}y_{\lambda}\leq \frac{1}{\mathcal{A}(x,|w|_{L^{r_2(x)}})}(w^{\beta_2(x)} + y_{\lambda}^{\beta_2(x)}  )|z_{\lambda}|_{L^{q_2(x)}}^{\alpha_2(x)}\; \text{in}\; \Omega, w \in [\mu \phi_1, z_{\lambda}]$$
The comparison principle imply $\mu \phi_1 \leq z_{\lambda}$ and $\mu \phi_2 \leq y_{\lambda}$ if $\mu$ is small. The result is proved.
\end{proof}

\subsection{A concave-convex problem:}

In this section we consider the following nonlocal problem with concave-convex nonlinearities

$$
\left \{
\begin{array}{rclcl}
-\mathcal{A}(x,|v|_{L^{r_1(x)}})\Delta_{p_1(x)}u &=&\lambda |u|^{\beta_1 (x)-1}u|v|^{\alpha_1(x)}_{L^{q_1(x)}} + \theta |v|^{\eta_1(x)-1}v|v|^{\gamma_1(x)}_{L^{s_1(x)}}  \ \mbox{in} \   \Omega, \\
-\mathcal{A}(x,|u|_{L^{r_2(x)}})\Delta_{p_2(x)}v &=&\lambda |v|^{\beta_2(x)-1}v|u|^{\alpha_2(x)}_{L^{q_2(x)}} + \theta |u|^{\eta_2(x)-1}u |u|^{\gamma_2(x)}_{L^{s_2(x)}}\ \mbox{in} \   \Omega, \\
u=v&=& 0\ \mbox{on}  \ \partial \Omega.
\end{array}
\right.\eqno{(S)_{\lambda,\theta}}
$$

The scalar and local version of $(S)_{\lambda,\theta}$ with $p(x)\equiv2$  and constant exponents was considered in the famous paper by Ambrosetti-Brezis-Cerami \cite{abc} in which a sub-supersolution
argument is used. In \cite{gel-giovany}, the problem $(S)_{\lambda,\theta}$ was studied with $p(x)\equiv2.$ The following result generalizes \cite[Theorem 7]{gel-giovany}.

\begin{theorem}Suppose that $r_i,p_i,q_i,s_i,\alpha_i$ and $\eta_i$ satisfy $(H)$ for $i=1,2 $ and  $\beta_i \in L^{\infty}(\Omega), i=1,2$  are nonnegative functions with $0<\alpha^{-}_i+\beta^{-}_i\leq\alpha^{+}_i+\beta^{+}_i<p^{-}_i-1,i=1,2$. Let $a_0,b_0>0$ positive numbers. The following assertions hold
	
\vspace{0.2cm}	

	\noindent{\bf $(A_{1})$} If $p^{+}_2-1<\eta^{-}_1+\gamma^{-}_1, \  p^{+}_1-1<\eta^{-}_2+\gamma^{-}_2$ and $\mathcal{A}(x,t)\geq a_{0}\;\text{in}\;\overline{\Omega}\times [0, b_{0}],$ then for each $\theta>0$ there exists $\lambda_{0}>0$ such that for each $\lambda\in(0,\lambda_{0})$ the  problem $(S)_{\lambda,\theta}$ has a positive solution $u_{\lambda,\theta}.$
	
\vspace{0.2cm}
	
	\noindent{\bf $(A_{2})$} Consider that $p^{+}_2-1<\eta^{-}_1+\gamma^{-}_1, \ p^{+}_1-1<\eta^{-}_2+\gamma^{-}_2$  and that the inequalities
	\begin{equation*}
	\frac{\beta^{+}_{1}}{p^{-}_{1}-1} + \frac{\alpha^{+}_{1} }{p^{-}_{2} -1} <1, \frac{\beta^{+}_{2}}{p^{-}_{2}-1} + \frac{\alpha^{+}_{2}}{p^{-}_{1}-1} <1
	\end{equation*}
hold.
	
Suppose that $0<\mathcal{A}(x,t)\leq a_{0}\;\mbox{in}\;\; \overline{\Omega}\times(0,\infty)$ and $\lim_{t\rightarrow\infty}\mathcal{A}(x,t)=b_{0}\;\text{uniformly in }\;\overline{\Omega}$.
	Then given $\lambda>0$, there exists $\theta_{0}>0$ such that for each $\theta \in(0,\theta_{0})$ the problem  $(S)_{\lambda,\theta}$ has a positive solution $u_{\lambda,\theta}.$
\end{theorem}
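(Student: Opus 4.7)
The plan is to verify the hypotheses of Theorem \ref{theorem-to-(S)}: construct a positive sub-supersolution pair $(\underline{u}_i,\overline{u}_i)$, $i=1,2$, keeping $\mathcal{A}$ strictly positive on the generated compact range of $L^{r_i(x)}$-norms. I would reuse the boundary-layer functions $\underline{u}_i=\mu\phi_i(k,\sigma)$ from the sublinear application as the subsolution; their subsolution inequalities reduce, exactly as in Theorem \ref{teo-sublinear}, to bounding $\tilde C(k\mu)^{p_i^--1}|\ln k\mu|$ on the annulus $\sigma<d(x)<2\delta$ by a uniformly positive constant coming from the $L^{q_i(x)}$-norm of $\mu\phi_j$ and from the positivity of $\lambda,\theta$. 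A L'H\^opital argument using $\alpha_i^{+}+\beta_i^{+}<p_i^{-}-1$ makes this possible for $k$ large and $\mu=e^{-ak}$.

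For the supersolution in case $(A_1)$, I would take $\overline{u}_i=z_{\mu_0}$ as in Lemma \ref{Fan} with $\mu_0>0$ \emph{small}, chosen so small that $|z_{\mu_0}|_{L^{r_i(x)}}\le b_0$ (hence $\mathcal{A}\ge a_0$ on the whole ordered interval) and $\|z_{\mu_0}\|_{L^\infty}\le C_*\mu_0^{1/(p_i^{+}-1)}$. The scheme is to bound the pointwise RHS by a convex piece of order $\theta\,\mu_0^{(\eta_1^{-}+\gamma_1^{-})/(p_2^{+}-1)}$, which is $o(\mu_0)$ thanks to $\eta_1^{-}+\gamma_1^{-}>p_2^{+}-1$ and therefore absorbable into $\mu_0/2$ by fixing $\mu_0$ sufficiently small, plus a concave piece of order $\lambda\,\mu_0^{(\alpha_1^{-}+\beta_1^{-})/(p_1^{+}-1)}$; once $\mu_0$ is fixed, the concave exponent is below one, and we shrink $\lambda\in(0,\lambda_0)$ to push this piece below $\mu_0/2$. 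Since $-\Delta_{p_i(x)}z_{\mu_0}=\mu_0$, the supersolution inequality then holds uniformly in $w$.

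In case $(A_2)$ the regime $0<\mathcal{A}\le a_0$ together with the possibility $\mathcal{A}(x,0)=0$ forces the opposite choice: take $\mu_0$ \emph{large} so that $|z_{\mu_0}|_{L^{r_i(x)}}$ lies in the range where $\mathcal{A}\ge\mathcal{A}_k:=\min\{b_0/2,\min_{[\underline{\sigma},a_1]}\mathcal{A}\}>0$, with $a_1$ coming from the uniform limit $\mathcal{A}(\cdot,t)\to b_0$. In this regime Lemma \ref{Fan} gives $\|z_{\mu_0}\|_{L^\infty}\le C^*\mu_0^{1/(p_i^{-}-1)}$. The concave part is then controlled by the Ambrosetti--Brezis--Cerami-type inequality $\beta_i^{+}/(p_i^{-}-1)+\alpha_i^{+}/(p_j^{-}-1)<1$ exactly as in the sublinear argument; the convex part now has exponent on $\mu_0$ strictly above one, so it must be suppressed by choosing $\theta\in(0,\theta_0)$ with $\theta_0=\theta_0(\lambda,\mu_0)$ small.

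The main obstacle is the bookkeeping of the ordered choice of $k,\mu,\mu_0$ and the free parameter ($\lambda$ in $(A_1)$, $\theta$ in $(A_2)$) so that $\underline{u}_i\le\overline{u}_i$, the compact set $\overline{\Omega}\times[\underline{\sigma},\overline{\sigma}]$ stays in the positivity region of $\mathcal{A}$, and both sub- and supersolution inequalities hold for \emph{every} admissible $w$ in the ordered interval. Uniformity in $w$ in particular requires tracking the monotone dependence of $|w|_{L^{m(x)}}^{\theta(x)}$ on $w$ through the Luxemburg norm together with Proposition \ref{norm-property}. Once this ordering is fixed, the conclusion follows directly from Theorem \ref{theorem-to-(S)}.
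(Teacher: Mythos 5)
Your proposal is correct and follows the same overall strategy as the paper --- verify the hypotheses of Theorem \ref{theorem-to-(S)} with the boundary-layer functions $\mu\phi_i$ as subsolutions and the torsion-type functions of Lemma \ref{Fan} as supersolutions --- but the parameter bookkeeping differs from the paper's in two places, both legitimately. In $(A_1)$ the paper does not introduce a separate height $\mu_0$: it takes $\overline{u}=z_\lambda$, $\overline{v}=y_\lambda$ solving $-\Delta_{p_i(x)}z=\lambda$ with the \emph{same} $\lambda$ as in the equation, so the supersolution inequality collapses to the single condition $\frac{1}{a_0}\bigl(\lambda^{1+\beta_1^-/(p_1^+-1)+\alpha_1^-/(p_2^+-1)}\overline{K}+\theta\lambda^{(\eta_1^-+\gamma_1^-)/(p_2^+-1)}\overline{K}\bigr)\le\lambda$, both exponents exceeding $1$ by the hypotheses $\alpha_1^-+\beta_1^->0$ and $p_2^+-1<\eta_1^-+\gamma_1^-$; your decoupled version (fix $\mu_0$ small to absorb the convex piece into $\mu_0/2$, then shrink $\lambda$) proves the same thing with one extra parameter. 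Note only that the concave piece carries the exponent $\beta_1^-/(p_1^+-1)+\alpha_1^-/(p_2^+-1)$ on $\mu_0$, not $(\alpha_1^-+\beta_1^-)/(p_1^+-1)$ as you wrote --- harmless, since all you use is that $\lambda$ enters linearly. In $(A_2)$ the paper instead optimizes $\Psi_{\lambda,\theta}(M)=\frac{\lambda\overline{K}}{\mathcal{A}_\lambda}M^{\rho-1}+\frac{\theta\overline{K}}{\mathcal{A}_\lambda}M^{\tau-1}$ over $M$, locating the minimizer $M_{\lambda,\theta}=(\lambda/\theta)^{1/(\tau-\rho)}c$ and showing its value is $\le1$ for $\theta$ small, so the supersolution height there depends on $\theta$ and blows up as $\theta\to0^+$; your version --- choose $M=M(\lambda)$ large enough that the concave term is at most $1/2$ (possible since $\rho<1$), then $\theta_0$ small enough that the convex term is at most $1/2$ for all $\theta<\theta_0$ --- is more elementary, avoids the calculus, and yields a single supersolution valid for every $\theta\in(0,\theta_0)$, at the cost of a non-optimal $\theta_0$. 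The remaining ingredients (positivity of $\mathcal{A}$ on the compact range of $|w|_{L^{r_i(x)}}$, the ordering $\mu\phi_i\le z_M$ via the comparison principle with $-\Delta_{p_i(x)}(\mu\phi_i)\le1\le M$, and uniformity in $w$ through the monotonicity of the Luxemburg norm and Proposition \ref{norm-property}) match the paper's argument.
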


\begin{proof} Suppose that $(A_{1})$ occurs. Consider $z_{\lambda}\in W_{0}^{1,p_1(x)}({\Omega})\bigcap L^{\infty}(\Omega)$ and  $y_{\lambda}\in W_{0}^{1,p_2(x)}({\Omega})\bigcap L^{\infty}(\Omega)$   the unique solutions of \eqref{probl-linear-lambda}, where $\lambda\in(0,1)$ will be chosen before.

Lemma \ref{Fan} imply that for $\lambda>0$ small enough there exists a constant $K>1$ that does not depend on $\lambda$ such that  
\begin{equation}\label{desig1-p-supsol-concavo}
0<z_{\lambda}(x)\leq K\lambda^{\frac{1}{p^{+}_{1}-1}}\;\text{in}\;\Omega, 
\end{equation}
\begin{equation}\label{desig2-p-supsol-concavo}
0<y_{\lambda}(x)\leq K\lambda^{\frac{1}{p^{+}_{2}-1}}\;\text{in}\;\Omega.
\end{equation}
In order to construct $\overline{u}$ and $\overline{v}$ we will prove, for each $\theta >0,$ that there exists $\lambda_0>0$ such that 

\begin{equation}\label{c_1}
\frac{1}{a_0}\left( \lambda |z_{\lambda}|^{\beta_1(x)-1}z_{\lambda}|y_{\lambda}|^{\alpha_1(x)}_{L^{q_1(x)}} + \theta |w|^{\eta_1(x)-1}w|y_{\lambda}|^{\gamma_1(x)}_{L^{s_1(x)}}\right) \leq \lambda, \forall w \in [0,y_{\lambda}]
\end{equation}
and
\begin{equation}\label{c_2}
\frac{1}{a_0}\left( \lambda |y_{\lambda}|^{\beta_2(x)-1}y_{\lambda}|z_{\lambda}|^{\alpha_2(x)}_{L^{q_2(x)}} + \theta |w|^{\eta_2(x)-1}w |z_{\lambda}|^{\gamma_2(x)}_{L^{s_2(x)}}\right) \leq \lambda, \forall w \in [0,z_{\lambda}].
\end{equation}

Let 
\begin{equation}\label{overK-def}
\overline{K}:=\displaystyle\max_{i=1,2}\big\{ K^{\beta_{i}^{+}} |K|^{\alpha^{+}_{i}}_{L^{q_i(x)}},K^{\beta_{i}^{+}} |K|^{\alpha^{-}_{i}}_{L^{q_i(x)}}, K^{\eta_{i}^{+}} |K|^{\gamma^{+}_{i}}_{L^{s_i(x)}},  K^{\eta_{i}^{+}} |K|^{\gamma^{-}_{i}}_{L^{s_i(x)}}\big\}.
\end{equation}
Since $0<\alpha^{-}_{1} + \beta^{-}_{1} $ and $p^{+}_{2} -1 <\eta^{-}_{1} + \gamma^{-}_{1},$ there exists $\lambda_0>0$ such that

\begin{equation}\label{c_3}
\frac{1}{a_0} \left( \lambda^{\frac{p^{+}_{1} -1 + \beta^{-}_{1}}{p^{+}_{1}-1} + \frac{\alpha^{-}_{1}}{p^{+}_{2} -1}} \overline{K} + \theta \lambda^{\frac{\eta^{-}_{1} + \gamma^{-}_{1}}{p^{+}_{2} -1}}\overline{K}\right) \leq \lambda,
\end{equation}
for all $\lambda \in (0,\lambda_0).$

If necessary, one can consider a smaller $\lambda_0>0$ such that  $|y_{\lambda}|_{L^{r_1(x)}} \leq |K|_{L^{r_1(x)}} \lambda^{\frac{1}{p^{+}_{2}-1}} \leq b_0$ for all $\lambda \in (0,\lambda_0).$ Therefore $\mathcal{A}(x, |w|_{L^{r_1(x)}}) \geq a_0, w \in [0,y_{\lambda}].$ Thus from \eqref{desig1-p-supsol-concavo}, \eqref{desig2-p-supsol-concavo}  and \eqref{c_3} we  have that \eqref{c_1} holds. Then we can conclude that
\begin{equation}\label{prop1}
 - \Delta_{p_{1}(x)} z_{\lambda} \geq  \frac{1}{\mathcal{A}(x,|w|_{L^{r_1(x)}})} \left( \lambda {z_{\lambda}}^{\beta_1(x)} |y_\lambda|^{\alpha_1(x)}_{L^{q_1(x)}} + \theta w^{\eta_1(x)} |y_\lambda|^{\gamma_1(x)}_{L^{s_1(x)}}\right),
 \end{equation}
for all $w \in [0,y_\lambda].$

Consider also that $\lambda_0$ satisfies 
\begin{equation}\label{c_4}
\frac{1}{a_0} \left( \lambda^{\frac{p^{+}_{2} -1 + \beta^{-}_{2}}{p^{+}_{2}-1} + \frac{\alpha^{-}_{2}}{p^{+}_{1} -1}} \overline{K} + \theta \lambda^{\frac{\eta^{-}_{2} + \gamma^{-}_{2}}{p^{+}_{1} -1}}\overline{K}\right) \leq \lambda
\end{equation}
for all $\lambda \in (0,\lambda_0)$ and that $|z_\lambda|_{L^{r_2(x)}} \leq |K|_{L^{r_2(x)}}\lambda^{\frac{1}{p^{+}_{1}-1}} \leq b_0$ for all $\lambda \in (0,\lambda_0).$ Therefore $\mathcal{A}(x, |w|_{L^{r_2(x)}}) \geq a_0, w \in [0,z_{\lambda}].$ Thus from \eqref{desig1-p-supsol-concavo}, \eqref{desig2-p-supsol-concavo}  and \eqref{c_4} we  have that \eqref{c_2} holds. Then we can conclude that
\begin{equation}\label{prop2}
 - \Delta_{p_{2}(x)} y_{\lambda} \geq  \frac{1}{\mathcal{A}(x,|w|_{L^{r_2(x)}})} \left( \lambda {z_{\lambda}}^{\beta_2(x)} |z_\lambda|^{\alpha_2(x)}_{L^{q_2(x)}} + \theta w^{\eta_2(x)} |z_\lambda|^{\gamma_2(x)}_{L^{s_2(x)}}\right)
 \end{equation}
for all $w \in [0,z_\lambda].$

In order to construct $\underline{u}$ and $\underline{v},$ consider $\phi_i, \delta, \sigma, \mu$ as in the proof of Theorem \ref{teo-sublinear}. Using the inequalities $\alpha^{+}_{i} + \beta^{+}_{i} < p^{-}_{i}-1,i=1,2$ and repeating the arguments of Theorem \ref{teo-sublinear}, we have that exists a number $\mu>0$ such that $\mu \phi_1 \leq z_{\lambda}, \mu \phi_2 \leq y_{\lambda},$
$$- \Delta_{p_1(x)}(\mu \phi_1) \leq \lambda, $$
$$ - \Delta_{p_1(x)} (\mu\phi_1) \leq \frac{1}{\mathcal{A}(x,|w|_{L^{r_1(x)}})} \left(\lambda (\mu\phi_1)^{\beta_1(x)} |\mu \phi_1|^{\alpha_1(x)}_{L^{q_1(x)}} + \theta w^{\eta_1(x)} |\mu \phi_2|^{\gamma_1(x)}_{L^{s_1(x)}} \right), $$
for all $w \in [\mu \phi_2, y_{\lambda} ]$ and
$$- \Delta_{p_2(x)}(\mu \phi_2) \leq \lambda, $$
$$ -\Delta_{p_2(x)}(\mu \phi_2) \leq \frac{1}{\mathcal{A}(x,|w|_{L^{r_2(x)}})} \left( \lambda (\mu \phi_2)^{\beta_2(x)} |\mu \phi_1|^{\alpha_2(x)}_{L^{q_2(x)}}  + \theta w^{\eta_2(x)} |\mu \phi_1|^{\gamma_2 (x)}_{L^{s_2(x)}}\right),$$
for all $w \in [\mu \phi_2, z_{\lambda}].$ Then by Theorem \ref{theorem-to-(S)} we have the result.

Now we will consider the condition $(A_2).$ Consider $\phi_i,\delta$ and $\sigma_i,i=1,2$ as in the first part of the result and let $\lambda >0$ fixed. Since $\alpha^{+}_{i} + \beta^{+}_{i}< p^{-}_{i}-1,i=1,2$ there exist $\mu>0$ depending only on $\lambda $ such that
$$- \Delta_{p_i(x)}(\mu \phi_i) \leq 1 \ \textrm{and} \ -\Delta_{p_i(x)}(\mu \phi) \leq \frac{1}{a_0} \lambda (\mu \phi_i)^{\beta_i(x)} |\mu \phi_j|^{\alpha_i(x)}, $$
for $w \in L^{\infty}(\Omega)$ with $w \geq \mu\phi_j, i,j=1,2$ and $i \neq j.$ 

Let $M>0$ that will be chosen before and consider $z_M \in W^{1,p_1(x)}_{0}(\Omega) \cap L^{\infty}(\Omega)$ and $y_M \in W^{1,p_2(x)}_{0}(\Omega) \cap L^{\infty}(\Omega)$ solutions of

\begin{equation*}
\begin{aligned}
\left\{\begin{array}{rcl}
-\Delta_{p_1(x)}z_{M} &=&M\;\;\mbox{in} \;\;\Omega,\\
\vspace{.2cm}
z_M&=&0\;\;\mbox{on}\;\;\partial\Omega.
\end{array}
\right.
\end{aligned}
\hspace{2cm}
\begin{aligned}
\left\{\begin{array}{rcl}
-\Delta_{p_2(x)}y_{M} &=&M\;\;\mbox{in} \;\;\Omega,\\
\vspace{.2cm}
y_M&=&0\;\;\mbox{on}\;\;\partial\Omega.
\end{array}
\right.
\end{aligned}
\end{equation*}

If $M$ is large enough by Lemma \ref{Fan} there exists a constant $K>1$ that does not depend on $M$ such that
\begin{equation}\label{desig3-p-supsol-concavo}
0<z_{M}(x)\leq KM^{\frac{1}{p^{-}_{1}-1}}\;\text{in}\;\Omega, 
\end{equation}
\begin{equation}\label{desig4-p-supsol-concavo}
0<y_{M}(x)\leq KM^{\frac{1}{p^{-}_{2}-1}}\;\text{in}\;\Omega.
\end{equation}

In order to construct ${\overline{u}}_i ,{\overline{v}}_i,i=1,2$ we will show that exist $\theta_0 >0$ depending  on $\lambda$ with the following property: if we consider $\theta \in (0,\theta_0)$ then there will be a constant $M$ depending only  on $\lambda$ and $\theta$   satisfying
\begin{equation}\label{c_5}
 M \geq \frac{1}{\mathcal{A}(x,|w|_{L^{r_1(x)}})} \left(  \lambda {z_M}^{\beta_1(x)} |y_M|^{\alpha_1(x)}_{L^{q_1(x)}} + \theta {w}^{\eta_1(x)} |y_M|^{\gamma_1(x)}_{L^{s_1(x)}} \right), w \in [\mu \phi_2, y_M]
\end{equation}
and
\begin{equation}\label{c_6}
 M \geq \frac{1}{\mathcal{A}(x,|w|_{L^{r_2(x)}})} \left(  \lambda {y_M}^{\beta_2(x)} |z_M|^{\alpha_2(x)}_{L^{q_2(x)}} + \theta {w}^{\eta_2(x)} |z_M|^{\gamma_2(x)}_{L^{s_2(x)}} \right), w \in [\mu \phi_1, z_M].
\end{equation}

Since $\mathcal{A}$ is continuous and $\displaystyle\lim_{t \rightarrow + \infty} \mathcal{A}(x,t)=b_0>0$ uniformly in $\Omega$ there is  $a_1 >0$ large enough such that $\mathcal{A}(x,t) \geq \frac{b_0}{2}$ in $\overline{\Omega} \times (a_1, +\infty).$ Define
$$m_\lambda:= \{ \mathcal{A}(x,t): (x,t) \in \overline{\Omega} \times [\min\{|\mu \phi_1|_{L^{r_1(x)}}, |\mu \phi_2|_{L^{r_2(x)}}\}, a_1]\} $$
and $\mathcal{A}_{\lambda}:= \min\{m_\lambda, \frac{b_0}{2} \}$. Then $\mathcal{A}(x,t) \geq \mathcal{A}_{\lambda}$ in $\overline{\Omega} \times [\min \{|\mu \phi_1|_{L^{r_1(x)}}, |\mu \phi_2|_{L^{r_2(x)}}\},\infty).$ Thus $\mathcal{A}_{\lambda} \leq \mathcal{A}(x,|w|_{L^{r_1(x)}}) \leq a_0$ for all $w \in L^{\infty}(\Omega)$ with $ \mu \phi_1 \leq w$ or $\mu \phi_2 \leq w$.
Note that from \eqref{desig3-p-supsol-concavo} and \eqref{desig4-p-supsol-concavo} the inequalities \eqref{c_5} and \eqref{c_6} hold if we have simultaneously the inequalities

$$ \frac{1}{\mathcal{A}_{\lambda}} \left( \lambda \overline{K} M^{\frac{\beta^{+}_{1}}{p^{-}_{1}-1} + \frac{\alpha^{+}_{1}}{p^{-}_{2}-1}} + \theta \overline{K} M^{\frac{\eta^{+}_{1} + \gamma^{+}_{1}}{p^{-}_{2}-1}}\right) \leq M$$
and
$$ \frac{1}{\mathcal{A}_{\lambda}} \left( \lambda \overline{K} M^{\frac{\beta^{+}_{2}}{p^{-}_{2}-1} + \frac{\alpha^{+}_{2}}{p^{-}_{1}-1}} + \theta \overline{K} M^{\frac{\eta^{+}_{2} + \gamma^{+}_{2}}{p^{-}_{1}-1}}\right) \leq M,$$
where $\overline{K}$ is given by \eqref{overK-def}. In order to obtain such inequalities we will study the inequality
\begin{equation}\label{equiv-f}
\frac{1}{\mathcal{A_\lambda}}\left( \lambda \overline{K} M^{\rho-1} + \theta \overline{K}M^{\tau-1}\right) \leq 1
\end{equation}
where 
$$\rho:= \max \left\{ \frac{ \beta^{+}_{1}}{p^{-}_{1}-1} + \frac{\alpha^{+}_{1}}{p^{-}_{2} -1 }, \frac{\beta^{+}_{2}}{p^{-}_{2} -1}  + \frac{\alpha^{+}_{2}}{p^{-}_{1}-1}\right\}$$
and
$$\tau:= \max \left\{\frac{\eta^{+}_{1} + \gamma^{+}_{1}}{p^{-}_{2}-1}, \frac{\eta^{+}_{2} + \gamma^{+}_{2}}{p^{-}_{1} -1} \right\}. $$
Define 
$$\Psi_{\lambda,\theta}(M):= \frac{\lambda \overline{K}}{\mathcal{A}_{\lambda}} M^{\rho-1} + \frac{\theta \overline{K}}{\mathcal{A}_{\lambda}} M^{\tau -1} , M >0.$$
Since $0< \rho<1$ and $\tau>1$ we have $\displaystyle\lim_{M \rightarrow 0^+}\Psi_{\lambda,\theta}(M) = \displaystyle\lim_{M \rightarrow + \infty }\Psi_{\lambda,\theta}(M) = +\infty. $
Note that ${\Psi_{\lambda,\theta}}^{\prime}(M) = 0$ if, and only if
\begin{equation}\label{minimum}
 M=M_{\lambda,\theta}:= \left( \frac{\lambda}{\theta}\right)^{\frac{1}{\tau - \rho}}c
\end{equation}
where $c:= \left( \frac{1-\rho}{\tau -1}\right)^{\frac{1}{\tau - \rho}}.$ From the above properties of $\Psi_{\lambda,\mu}$ we have that the global minimal of $\Psi_{\lambda, \theta}$ occurs at $M_{\lambda,\theta}$. The inequality \eqref{equiv-f} is equivalent to find
$M_{\lambda,\theta} > 0$ such that $\Psi_{\lambda,\theta}(M_{\lambda,\theta})\leq 1$. By \eqref{minimum}, we have $\Psi_{\lambda,\theta}(M_{\lambda,\theta}) \leq 1,$ if and only if 

\begin{equation}\label{minimum-2}
\frac{\lambda \overline{K}}{\mathcal{A}_{\lambda}}\left( \frac{\lambda}{\theta}\right)^{\frac{\rho-1}{\tau - \rho}}c^{\rho-1} + \theta^{1 - \left(\frac{\tau-1}{\tau-\rho}\right)} \frac{\overline{K}}{\mathcal{A}_{\lambda}}\lambda^{\frac{\tau-1}{\tau - \rho}}c^{\tau-1} \leq 1. 
\end{equation}
Thus from \eqref{minimum} and \eqref{minimum-2}, we have that given $\lambda>0$ there exists $\theta_0>0$ such that for each $\theta \in (0,\theta_0)$ there exists $M_{\lambda,\theta}$ such that
$$ M_{\lambda ,\theta} \geq 1 \ \textrm{and} \ \frac{1}{\mathcal{A}_{\lambda}} \left( \lambda \overline{K} {M_{\lambda,\theta}}^{\rho-1} + \theta \overline{K}{M_{\lambda,\theta}}^{\tau-1}\right) \leq 1.$$
Therefore 
$$ -\Delta_{p_1(x)} z_M  \geq \frac{1}{\mathcal{A}_{\lambda}} \left( \lambda {z_M}^{\beta_1(x)} |y_M|^{\alpha_1(x)}_{L^{q_1(x)}} + \theta {w}^{\eta_1(x)} |y_M|^{\gamma_1(x)}_{L^{s_1(x)}}\right) \ \textrm{in} \ \Omega,$$
for all $w \in [\mu \phi_2 , y_M]$ and
$$ -\Delta_{p_2(x)} y_M  \geq \frac{1}{\mathcal{A}_{\lambda}} \left( \lambda {y_M}^{\beta_2(x)} |z_M|^{\alpha_2(x)}_{L^{q_2(x)}} + \mu {w}^{\eta_2(x)} |z_M|^{\gamma_w(x)}_{L^{s_w(x)}}\right) \ \textrm{in} \ \Omega,$$
for all $w \in [\mu \phi_1, z_M].$
Note that for $\theta_0 > 0$ small enough, we have for $\theta \in (0,\theta_0)$ that
$$- \Delta_{p_1(x)} (\mu \phi_1)  \leq 1 \leq M_{\lambda, \theta_0} \leq M_{\lambda,\theta},$$
because $M_{\lambda,\theta} \rightarrow +\infty$ as $\theta \rightarrow 0^+$ and $\theta \longmapsto M_{\lambda, \theta}$ is decreasing. Similarly, we have $-\Delta_{p_2(x)}(\mu \phi_2) \leq M_{\lambda, \theta_0} \leq M_{\lambda, \theta} $ for all $\theta \in (0,\theta_0)$, if $\theta_0$ is small enough. The weak maximum principle imply that $\mu \phi_1 \leq z_M$ and $\mu \phi_2 \leq y_M.$ The result is proved.

\end{proof}

\subsection{A generalization of the logistic equation:}

In the previous sections, we considered at least one of the conditions $\mathcal{A}(x,t)\geq a_{0}>0$ or $0<\mathcal{A}(x,t)\leq a_{\infty}, t>0.$ In this last section we study a generalization of the classic logistic equation where the function $\mathcal{A}(x,t)$  can satisfy
$$\mathcal{A}(x,0)\geq0,\;\;\;\lim_{t\rightarrow0^{+}}\mathcal{A}(x,t)=\infty
\;\;\;\mbox{and}\;\;\;\lim_{t\rightarrow + \infty}\mathcal{A}(x,t)=\pm\infty.$$

We will consider the problem
$$\left\{\begin{array}{rcl}
-\mathcal{A}(x,|v|_{L^{r_1(x)}})\Delta_{p_1(x)} u&=&\lambda f_1(u)|v|_{L^{q_1(x)}}^{\alpha_1(x)}\;\;\mbox{in}\;\;\Omega,\\
-\mathcal{A}(x,|u|_{L^{r_2(x)}})\Delta_{p_2(x)} v&=&\lambda f_2(v)|u|_{L^{q_2(x)}}^{\alpha_2(x)}\;\;\mbox{in}\;\;\Omega,\\
\vspace{.2cm}
u=v&=&0\;\;\mbox{on}\;\;\partial\Omega.
\end{array}
\right. \eqno{(P)_{\lambda}}$$

We suppose that there are numbers $\theta_i>0,i=1,2$ such that the functions $f_i:[0,\infty)\rightarrow\mathbb{R}$ satisfy the conditions:

\noindent{\bf $(f_{1})$} $\;f_i\in C^{0}([0,\theta_i],\mathbb{R}),i=1,2;$

\noindent{\bf $(f_{2})$} $\;f_i(0)=f_i(\theta_i)=0,\;\;f_i(t)>0\; \text{in}\;(0,\theta_i)$ for $i=1,2.$

Problem  $(P)_{\lambda}$ is a generalization of the problems studied in  \cite{chipot-correa, chipot-roy, gel-giovany}. The next result generalizes \cite[Theorem 8]{gel-giovany}.

\begin{theorem}
	Suppose that  $r_i,p_i,q_i,\alpha_i$ satisfy $(H).$ Consider also that $f_i,i=1,2$ satisfies $(f_{1}), (f_{2})$ and that	$\mathcal{A}(x,t)>0$ in $\overline{\Omega}\times\bigl(0,\max\{|\theta_1|_{L^{r_2(x)}},|\theta_2|_{L^{r_1(x)}}\}\bigl].$ Then there exists $\lambda_{0}>0$ such that $\lambda\geq\lambda_{0},$ $(P)_{\lambda}$ has a positive solution.
\end{theorem}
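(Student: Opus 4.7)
The plan is to apply Theorem \ref{theorem-to-(S)} with an explicit sub-supersolution pair built from the boundary-adapted functions $\phi_i$ already used in the proof of Theorem \ref{teo-sublinear}. As supersolution, take the constants $\overline{u}_i\equiv\theta_i$ for $i=1,2$. Since $-\Delta_{p_i(x)}\theta_i=0$ and $\widetilde{f}_i(x,\theta_i,w)=f_i(\theta_i)=0$ by $(f_{2})$, the supersolution inequalities in \eqref{eq2.1} reduce to $0\geq 0$ and are satisfied trivially, for every admissible $w$, independently of $\mathcal{A}$.

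For the subsolution, I set $\underline{u}_i:=\mu\phi_i$, with $\phi_i=\phi_i(k,\sigma)$ exactly as in the proof of Theorem \ref{teo-sublinear} ($\sigma=(\ln 2)/k$, $\mu=e^{-ak}$, $k$ large). I then shrink $\mu$ further so that $\mu\|\phi_i\|_{L^\infty}<\theta_i$ for $i=1,2$; this keeps $\underline{u}_i>0$ a.e.\ and $\underline{u}_i\leq\overline{u}_i$. On the regions $\{d(x)<\sigma\}$ and $\{d(x)>2\delta\}$ the explicit expression of $-\Delta_{p_i(x)}(\mu\phi_i)$ displayed in the proof of Theorem \ref{teo-sublinear} is non-positive for $k$ large, so the subsolution inequality holds automatically because its right-hand side is non-negative. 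On the annulus $\{\sigma\leq d(x)\leq 2\delta\}$ one has $\phi_i(x)\geq e^{k\sigma}-1=1$, hence $\mu\phi_i(x)$ takes values in the compact set $[\mu,\mu\|\phi_i\|_{L^\infty}]\subset(0,\theta_i)$; by $(f_{1})$ and $(f_{2})$ we obtain $f_i(\mu\phi_i(x))\geq c_i>0$ and $|\mu\phi_j|_{L^{q_i(x)}}^{\alpha_i(x)}\geq d_i>0$, while the Liu-type estimate used in the proof of Theorem \ref{teo-sublinear} yields $-\Delta_{p_i(x)}(\mu\phi_i)\leq \tilde{C}(k\mu)^{p_i^--1}|\ln(k\mu)|$ on this annulus.

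Since $\mu\phi_i(x)\in(0,\theta_i]$ for all $x$, the quantities $\underline{\sigma}$ and $\overline{\sigma}$ attached to the pair $(\mu\phi_i,\theta_i)$ in the statement of Theorem \ref{theorem-to-(S)} satisfy $[\underline{\sigma},\overline{\sigma}]\subset(0,\max\{|\theta_1|_{L^{r_2(x)}},|\theta_2|_{L^{r_1(x)}}\}]$; the hypothesis on $\mathcal{A}$ then guarantees that $\mathcal{A}$ is continuous and strictly positive on the compact set $\overline{\Omega}\times[\underline{\sigma},\overline{\sigma}]$, and hence admits a finite upper bound $\mathcal{A}_{\ast}>0$ there. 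Defining
$$\lambda_0 := \max_{i=1,2}\,\frac{\mathcal{A}_{\ast}\tilde{C}(k\mu)^{p_i^--1}|\ln(k\mu)|}{c_i d_i},$$
one checks that for every $\lambda\geq\lambda_0$ and every $w\in[\underline{u}_j,\overline{u}_j]$ the subsolution inequalities in \eqref{eq2.1} also hold on the annulus, and Theorem \ref{theorem-to-(S)} produces a positive solution $(u_1,u_2)\in[\mu\phi_1,\theta_1]\times[\mu\phi_2,\theta_2]$. The main obstacle is the simultaneous calibration of $k$, $\mu$ and $\lambda$: $k$ must be large enough to force $-\Delta_{p_i(x)}(\mu\phi_i)\leq 0$ off the annulus, $\mu$ small enough to keep $\mu\phi_i\leq\theta_i$ pointwise (so that $\mathcal{A}$ stays inside its region of positivity and $f_i(\mu\phi_i)$ is bounded below away from zero), and finally $\lambda$ large enough to dominate the residual positive contribution of $-\Delta_{p_i(x)}(\mu\phi_i)$ on the annulus, which is precisely where the threshold $\lambda_0$ originates. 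The fact that $\mathcal{A}$ is allowed to blow up or vanish at $0$ or at $+\infty$ is harmless, because our range of admissible arguments $[\underline{\sigma},\overline{\sigma}]$ is pinned inside the positive compact interval prescribed by the hypothesis.
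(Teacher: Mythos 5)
Your proof is correct, but it follows a genuinely different route from the paper's. The paper takes the same supersolution $(\theta_1,\theta_2)$ but builds the subsolution variationally: it truncates $f_i$, minimizes the coercive functionals $J_{i,\widetilde{\lambda}_0}$ to produce nontrivial nonnegative solutions $z_0,w_0$ of $-\Delta_{p_i(x)}z=\widetilde{\lambda}_0\widetilde{f}_i(z)$, proves $0<z_0\leq\theta_1$ and $0<w_0\leq\theta_2$ with the test function $(z_0-\theta_1)^{+}$ and the strong maximum principle, and then sets $\lambda_0=\widetilde{\lambda}_0\mathcal{A}_0/C$ so that $(z_0,\theta_1)$ and $(w_0,\theta_2)$ become a sub-supersolution pair for every $\lambda\geq\lambda_0$. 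You instead recycle the explicit boundary-layer functions $\mu\phi_i$ of Theorem \ref{teo-sublinear} and absorb the residual positive part of $-\Delta_{p_i(x)}(\mu\phi_i)$ on the annulus $\{\sigma\leq d(x)\leq2\delta\}$ by taking $\lambda$ large; this is legitimate here because, unlike in the sublinear application, you do not need the right-hand side to dominate as $\mu\to0$ (so the growth restriction that drives \eqref{lhopital} plays no role), and it buys an entirely explicit subsolution with no recourse to minimization, $C^{1,\alpha}$ regularity or the strong maximum principle --- indeed the authors' closing Remark anticipates exactly this alternative. One step you should phrase more carefully: you may not ``shrink $\mu$ further'' with $k$ fixed, because $\mu=e^{-ak}$ enters \eqref{negative} through $\ln(k\mu)/k$, and decoupling $\mu$ from $k$ can destroy the sign $-\Delta_{p_i(x)}(\mu\phi_i)\leq0$ on $\{d(x)<\sigma\}$, precisely the region where the right-hand side degenerates (since $f_i(0)=0$) and no choice of large $\lambda$ can compensate. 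The cure is to enlarge $k$ instead: $|\phi_i|_{L^{\infty}}\leq1+4k\delta$ while $\mu=e^{-ak}$, so $\mu|\phi_i|_{L^{\infty}}\to0$ and the requirement $\mu\phi_i<\theta_i$ is met within the coupled family; with that repair, your choice of $\lambda_0$ and the application of Theorem \ref{theorem-to-(S)} go through.
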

\begin{proof}
Consider the functions $\widetilde{f}_i(t)=f_i(t)$ for $t\in[0,\theta_i],$ and $\widetilde{f}_i(t)=0$ for $t\in\mathbb{R}\setminus[0,\theta_i].$ The functional 
\begin{align*}
J_{\lambda}(u,v)&=\int_{\Omega}\frac{1}{p_1(x)}|\nabla u|^{p_1(x)}dx-\lambda\int_{\Omega}\widetilde{F}_1(u)dx + \int_{\Omega}\frac{1}{p_2(x)}|\nabla v|^{p_2(x)}dx-\lambda\int_{\Omega}\widetilde{F}_2(v)dx\\
&:= J_{1,\lambda}(u) +  J_{2,\lambda}(v),
\end{align*}
where $\widetilde{F}_i(t)=\int_{0}^{t}\widetilde{f}_i(s)ds$ is of class $C^{1}(W_{0}^{1,p_1(x)} \times W_{0}^{1,p_2(x)}(\Omega),\mathbb{R})$ and $W_{0}^{1,p_1(x)} \times W_{0}^{1,p_2(x)}(\Omega) $ is a Banach space endowed with the norm
$$|(u,v)|:= \max \left\{| \nabla u|_{p_1(x)}, | \nabla v|_{p_2(x)}\right\}.$$


Since $|\widetilde{f}(t)|\leq C$ for $t\in\mathbb{R}$ we have that $J$ is coercive. Thus $J$ has a minimum $(z_{\lambda},w_{\lambda}) \in W^{1,p_1(x)}_{0}(\Omega) \times W^{1,p_2(x)}_{0}(\Omega)$ with
\begin{equation}\label{P1}
\left\{\begin{array}{rcl}
-\Delta_{p_1(x)} z_{\lambda}&=&\lambda \widetilde{f}_1(z_\lambda)\;\;\mbox{in}\;\;\Omega,\\
\vspace{.2cm}
z_\lambda&=&0\;\;\mbox{on}\;\;\partial\Omega,
\end{array}
\right.
\end{equation}
and
\begin{equation}\label{P2}
\left\{\begin{array}{rcl}
-\Delta_{p_2(x)} w_{\lambda}&=&\lambda \widetilde{f}_2(w_\lambda)\;\;\mbox{in}\;\;\Omega,\\
\vspace{.2cm}
w_\lambda&=&0\;\;\mbox{on}\;\;\partial\Omega.
\end{array}
\right.
\end{equation}
Note that the unique solutions of $\eqref{P1}$ and $\eqref{P2}$ are given by the minimum of functionals $J_{1,\lambda}$ and $J_{2,\lambda}$ respectively.

Consider a function $\varphi_0 \in W^{1,p_i(x)}_{0}(\Omega), i=1,2$ with $\tilde{F}_{i}(\varphi_0)>0,i=1,2.$
Define $(z_0,w_0):= (z_{\tilde{\lambda}_{0}}, w_{\tilde{\lambda}_{0}}),$ where $\tilde{\lambda}_0$ satisfies
$$\int_{\Omega}\frac{1}{p_i(x)}|\nabla \varphi_{0}|^{p_i(x)} dx <\widetilde{\lambda}_{0}\int_{\Omega}\widetilde{F}_i(\varphi_{0}) dx,i=1,2.$$
We have  $J_{1,\tilde{\lambda}_0}(z_0) \leq J_{1,\tilde{\lambda}_0}(\varphi_0)<0$ and also that $J_{2,\tilde{\lambda}_0}(z_0)<0$. Therefore $z_0 \neq 0$ and $w_0 \neq 0$. Since $-\Delta_{p_1(x)}z_0$ and $-\Delta_{p_2(x)} w_0 $ are nonnegative, we have $z_0,w_0 >0$ in $\Omega.$ Note that by \cite[Theorem 4.1]{fan-zao}  and  \cite[Theorem 1.2]{fan-regular}, we obtain that $z_{0},w_0\in C^{1,\alpha}(\overline{\Omega})$ for some $\alpha \in (0,1].$

Using the test function $\varphi=(z_{0}-\theta_1)^{+}\in W_{0}^{1,p_1(x)}(\Omega)$ in $\eqref{P1}$ we get
$$\int_{\Omega}|\nabla z_{0}|^{p_1(x)-2}\nabla z_{0}\nabla(z_{0}-\theta_1)^{+} dx=\widetilde{\lambda}_{0}\int_{\{z_{0}>\theta\}}\widetilde{f}_1(z_{0})(z_{0}-\theta_1) dx=0.$$
Therefore
\begin{eqnarray*}
	\int_{\{z_{0}>\theta\}}\bigl<|\nabla z_{0}|^{p(x)-2}\nabla z_{0}-|\nabla\theta_1|^{p_1(x)-2}\nabla \theta_1,\nabla(z_{0}-\theta_1)\bigl> dx=0,
\end{eqnarray*}
which imply  $(z_{0}-\theta_1)_{+}=0$ in $\Omega.$ Thus $0<z_{0}\leq\theta_1.$ A similar reasoning provides $0<w_0 \leq \theta_2.$

Note that there is a constant $C>0$ such that $|z_{0}|_{L^{q_1(x)}}^{\alpha_1(x)},|w_0|^{\alpha_2(x)}_{L^{q_2(x)}}\geq C.$ Define $\mathcal{A}_{0}:=\max\big\{\mathcal{A}(x,t):(x,t)\in\overline{\Omega}\times[\min \{ |z_0|_{L^{r_2(x)}}, |w_0|_{L^{r_1(x)}}\}, \max\{|\theta_1|_{L^{r_2(x)}},|\theta_2|_{L^{r_1(x)}} \} \big\}$ and $\mu_{0}=\frac{\mathcal{A}_{0}}{C}.$ Then, we have
$$-\Delta_{p_1(x)}z_{0}=\widetilde{\lambda}_{0} f_1(z_{0})=\frac{1}{\mathcal{A}_{0}}\widetilde{\lambda}_{0}\mu_{0}f_1(z_{0})|w_{0}|_{L^{q_1(x)}}^{\alpha_1(x)}\frac{\mathcal{A}_{0}}{\mu_{0}|z_{0}|_{L^{q_1(x)}}^{\alpha_1(x)}}\leq \frac{1}{\mathcal{A}_{0}}\widetilde{\lambda}_{0}\mu_{0}f_1(z_{0})|w_{0}|_{L^{q_1(x)}}^{\alpha_1(x)}.$$

Thus for each $\lambda\geq \lambda_0:=\widetilde{\lambda}_{0}\mu_{0}$ and $w\in[w_0,\theta_2],$ we get
$$-\Delta_{p_1(x)}z_{0}\leq\frac{1}{\mathcal{A}(x,|w|_{L^{r_1(x)}})}\lambda f_1(z_{0})|w_{0}|_{L^{q_1(x)}}^{\alpha_1(x)}.$$

If necessary, we can consider a bigger $\lambda_0>0$ such that
$$- \Delta_{p_2(x)} w_0 \leq \frac{1}{\mathcal{A}(x,|w|_{L^{r_2(x)}})} \lambda f_2(w_0)|z_0|^{\alpha_2(x)}_{L^{q_2(x)}}, $$
for all $\lambda \geq \lambda_0$ and $w \in [z_0,\theta_1].$

Since $f_i(\theta_i)=0,i=1,2,$ we have that $(z_0,\theta_1)$ and $(w_0,\theta_2)$ are sub-supersolutions pairs for $(P)_{\lambda}$.  The result is proved.
\end{proof}

\begin{remark}
We would like to point that out that is possible to use the functions $\phi_i,i=1,2$ from the proof of Theorem \ref{teo-sublinear} to consider  problem $(P)_{\lambda}.$  However, more restrictions on the functions $p_i,f_i,i=1,2$ are needed.
\end{remark}


\end{document}